\newcommand{\R}{\mathbb{R}}
\newcommand{\N}{\mathbb{N}}
\newcommand{\ur}[1]{\mathrm{#1}}
\newcommand{\ure}{\ur e}
\newcommand{\eps}{\varepsilon}
\newcommand{\gt}{>}
\newcommand{\lt}{<}
\newcommand{\defs}{\coloneqq}
\newcommand{\sfed}{\eqqcolon}
\newcommand{\ra}{\rightarrow}
\newcommand{\nea}{\nearrow}
\newcommand{\sea}{\searrow}
\newcommand{\ol}{\overline}
\newcommand{\diff}{\,\mathrm{d}}
\newcommand{\ds}{\,\mathrm{d}s}
\newcommand{\dr}{\,\mathrm{d}r}
\newcommand{\dsigma}{\,\mathrm{d}\sigma}
\newcommand{\drho}{\,\mathrm{d}\rho}
\newcommand{\ddt}{\frac{\mathrm{d}}{\mathrm{d}t}}
\newcommand{\hp}{\hphantom}
\newcommand{\pe}{\mathrel{\hp{=}}}
\newcommand{\tmax}{T_{\max}}
\newcommand{\tmmax}{\hat T_{\max}}
\newcommand{\intom}{\int_\Omega}
\newcommand{\intns}{\int_0^{s_0}}
\newcommand{\ombar}{\ol \Omega}
\newcommand{\leb}[1]{{L^{#1}(\Omega)}}
\newcommand{\sob}[2]{{W^{#1, #2}(\Omega)}}
\newcommand{\con}[1]{{C^{#1}(\ombar)}}
\renewcommand{\paragraph}[2][.]{\textbf {#2#1}}
\newtheoremstyle{nplain}
  {\topsep}   
  {\topsep}   
  {\itshape}  
  {0pt}       
  {\bfseries} 
  {.}         
  {5pt plus 1pt minus 1pt} 
  {\thmnumber{#2 }\thmname{#1}\thmnote{ (#3)}} 
\newtheoremstyle{ndefinition}
  {\topsep}   
  {\topsep}   
  {}   	      
  {0pt}       
  {\bfseries} 
  {.}         
  {5pt plus 1pt minus 1pt} 
  {\thmnumber{#2 }\thmname{#1}\thmnote{ (#3)}} 
\newtheorem{base}{Base}[section]
\numberwithin{equation}{section}
\theoremstyle{nplain}
\newtheorem{theorem}[base]{Theorem} \newtheorem*{theorem*}{Theroem}
\newtheorem{lemma}[base]{Lemma} \newtheorem*{lemma*}{Lemma}
\newtheorem{prop}[base]{Proposition} \newtheorem*{prop*}{Proposition}
 \newtheorem*{cor*}{Corollary}
\theoremstyle{ndefinition}
 \newtheorem*{definition*}{Definition}
 \newtheorem*{example*}{Example}
 \newtheorem*{cond*}{Condition}
\newtheorem{remark}[base]{Remark} \newtheorem*{remark*}{Remark}
\begin{document}
\setkomafont{title}{\normalfont \Large}
\title{Finite-time blow-up in a two-dimensional Keller--Segel system with an environmental dependent logistic source}
\author{
Mario Fuest\footnote{fuestm@math.uni-paderborn.de}\\
{\small Institut f\"ur Mathematik, Universit\"at Paderborn,}\\
{\small 33098 Paderborn, Germany}
}
\date{}
\maketitle

\begin{abstract}
  \noindent
  The Neumann initial-boundary problem for the chemotaxis system
  \begin{align} \label{prob:abstract} \tag{$\star$}
    \begin{cases}
      u_t = \Delta u - \nabla \cdot (u \nabla v) + \kappa(|x|) u - \mu(|x|) u^p, \\
      0   = \Delta v - \frac{m(t)}{|\Omega|} + u, \quad m(t) \defs \intom u(\cdot, t)
    \end{cases}
  \end{align}
  is studied in a ball $\Omega = B_R(0) \subset \R^2$, $R \gt 0$
  for $p \ge 1$ and sufficiently smooth functions $\kappa, \mu: [0, R] \ra [0, \infty)$.\\[5pt]
  We prove that whenever $\mu', -\kappa' \ge 0$ 
  as well as $\mu(s) \le \mu_1 s^{2p-2}$ for all $s \in [0, R]$ and some $\mu_1 \gt 0$
  then for all $m_0 \gt 8 \pi$ there exists $u_0 \in \con0$ with $\intom u_0 = m_0$
  and a solution $(u, v)$ to \eqref{prob:abstract} with initial datum $u_0$ blowing up in finite time.
  If in addition $\kappa \equiv 0$ then  all solutions with initial mass smaller than $8 \pi$ are global in time,
  displaying a certain critical mass phenomenon.\\[5pt]
  On the other hand, if $p \gt 2$,
  we show that for all $\mu$ satisfying $\mu(s) \ge \mu_1 s^{p-2-\eps}$ for all $s \in [0, R]$ and some $\mu_1, \eps \gt 0$
  the system \eqref{prob:abstract} admits a global classical solution for each initial datum $0 \le u_0 \in \con0$.\\[5pt]
  \textbf{Key words:} {chemotaxis, critical mass, finite-time blow-up, logistic source}\\
  \textbf{AMS Classification (2010):} {35B44 (primary); 35B33, 35K65, 92C17 (secondary)}
\end{abstract}

\section{Introduction}
\newcommand{\growthrate}{\kappa}
We live in a heterogeneous environment
and the fact that for instance growth or death rates may depend on spatial features 
has been incorporated into several models describing population dynamics.
Among the more famous examples is the system
\begin{align} \label{prob:competition}
  \begin{cases}
    u_t = d_1 \Delta u + u [\growthrate(x) - u - v], \\
    v_t = d_2 \Delta v + v [\growthrate(x) - u - v]
  \end{cases}
\end{align}
with $d_1, d_2 \gt 0$ and $\growthrate: \Omega \ra [0, \infty)$,
$\Omega \subset \R$, $n \in \N$, being a smooth, bounded domain,
modelling two species $u$ and $v$ competing for a common resource,
where $\growthrate$ represents a reproduction rate influenced by the environment.

It has the remarkable property that whenever $d_1 \lt d_2$,
then there exists $u_\infty(\growthrate) \gt 0$
such that for any initial data $u_0, v_0 \in \con0$ with $u_0, v_0 \ge 0$ and $v_0 \not\equiv 0$
the corresponding solution $(u, v)$ converges to $(u_\infty(\growthrate), 0)$ --
provided $\growthrate$ is not constant, which reflects spatial heterogeneity (\cite{DockeryEtAlEvolutionSlowDispersal1998}).
If, however, $\growthrate$ is constant 
then $(\lambda \growthrate, (1-\lambda) \growthrate))$ is a steady state of \eqref{prob:competition} for all $\lambda \in [0, 1]$
implying that species with different diffusion rates may coexist in homogeneous environments.
Furthermore, there is considerable activtiy in the analysis of systems similar to \eqref{prob:competition};
for instance, convections terms have been added to these equations (\cite{LouEtAlGlobalDynamicsLotka2018})
and the case of weak competition (\cite{HeNiGlobalDynamicsLotka2016, LouEffectsMigrationSpatial2006})
has been studied in great detail as well.

These results (among others) may arouse interest to consider environmental depending functions in other models as well:
The system
\begin{align} \label{prob:log_pp}
  \begin{cases}
    u_t = \Delta u - \nabla \cdot (u \nabla v) + \kappa u - \mu u^p, \\
    v_t = \Delta v - v + u,
  \end{cases}
\end{align}
in $\Omega \times (0, T)$, where $\Omega \subset \R^n$, $n \in \N$, is a smooth, bounded domain, $T \in (0, \infty]$
and $\kappa, \mu \gt 0$ and $p \ge 1$ are given parameters,
is relevant in the modeling of, for instance, micro- and macroscopic population dynamics
(\cite{HillenPainterUserGuidePDE2009}, \cite{ShigesadaEtAlSpatialSegregationInteracting1979})
or tumor invasion processes
(\cite{ChaplainLolasMathematicalModellingCancer2005}).

For these so-called chemotaxis systems,
at first introduced by Keller and Segel (\cite{KellerSegelTravelingBandsChemotactic1971})
even questions of global existence and boundedness are of great interest.
After all, if one chooses $\kappa = \mu \equiv 0$ in \eqref{prob:log_pp}
in space-dimensions two (\cite{HorstmannWangBlowupChemotaxisModel2001, SenbaSuzukiParabolicSystemChemotaxis2001})
and higher (\cite{WinklerFinitetimeBlowupHigherdimensional2013}) there are initial data leading to blow-up.
For a more broad introduction to Keller--Segel models,
which have been intensively studied in the past decades,
we refer to the survey \cite{BellomoEtAlMathematicalTheoryKeller2015}.

Intuitively, the superlinear degrading term $\mu u^p$ (with $\mu \gt 0$ and $p \gt 1$) in \eqref{prob:log_pp})
should somewhat decrease the possibility of (finite-time) blow-up.
However, exactly how large $\mu$ and $p$ need to be in order to guarantee global existence seems to be an open question,
even for constant $\kappa, \mu \ge 0$.

If $n = 2$ and $\mu \gt 0$
all classical solutions to \eqref{prob:log_pp} exist globally in time (\cite{OsakiEtAlExponentialAttractorChemotaxisgrowth2002}).
One may even replace $u^2$
by a function growing slightly slower than $s \mapsto s^2$ (\cite{XiangSublogisticSourceCan2018}).
The same holds true in higher dimensions,
provided $p \gt 2$ or $p = 2$ and $\mu \gt \frac{n}{4}$ (\cite{WinklerBoundednessHigherDimensionalParabolicParabolic2010}),
while for $p = 2$ and any $\mu \gt 0$ at least global weak solutions have been constructed,
which become smooth after finite time provided $\kappa$ is small enough (\cite{LankeitEventualSmoothnessAsymptotics2015}).

As chemicals can be assumed to diffuse much faster than cells
a typical simplification of \eqref{prob:log_pp} is the parabolic-elliptic system
\begin{align} \label{prob:log_pe}
  \begin{cases}
    u_t = \Delta u - \nabla \cdot (u \nabla v) + \kappa u - \mu u^p, \\
    0   = \Delta v - v + u.
  \end{cases}
\end{align}
For $n = 2$ the conditions $p \ge 2$ and $\mu \gt 0$ suffice to ensure global existence 
while for $n \ge 3$, $p = 2$ and $\mu \ge \frac{n-2}{n}$ or $n \ge 3$, $p \gt 2$ and arbitrary $\mu \gt 0$ the same can be achieved
(\cite{KangStevensBlowupGlobalSolutions2016, TelloWinklerChemotaxisSystemLogistic2007}).

On the other hand,
any thresholds may be surpassed,
if $p = 2$, $\mu \in (0, 1)$
and the diffusion is sufficiently weak, that is,
$\Delta u$ in the first equation in \eqref{prob:log_pp} is replaced by $\eps \Delta u$ for suitable $\eps \gt 0$
(\cite{WinklerHowFarCan2014, LankeitChemotaxisCanPrevent2015}).
This stays in contrast to the case without cross-diffusion as then $\ol u \defs \max\{\|u_0\|_{\leb \infty}, \frac{\kappa}{\mu}\}$
always forms a supersolution
and thus indicates that in chemotaxis systems with logistic source nontrivial structures may emerge
at least on intermediate time scales.

Even more drastic formations are known to form if $p$ is chosen close to (but sill larger than) $1$.
After initial data causing finite-time blow-up have been constructed in dimensions five and higher for certain $p \gt \frac32$
in a system closely related to \eqref{prob:log_pe}
in \cite{WinklerBlowupHigherdimensionalChemotaxis2011},
in \cite{WinklerFinitetimeBlowupLowdimensional2018} finite-time blow-up has also been shown to occur in \eqref{prob:log_pe}
for any $n \ge 3$ and
\begin{align*}
  \begin{cases}
    p \lt \frac76, & n \in \{3, 4\}, \\
    p \lt 1 + \frac{1}{2(n-1)}, & n \ge 5.
  \end{cases}
\end{align*}
Hence, at least in space-dimensions three and higher even superlinear degegration terms do not always ensure global existence.

The case of $\mu$ and $\kappa$ depending on space (and time) has also been studied.
In their three-paper series
\cite{SalakoShenParabolicellipticChemotaxisModel2018,
SalakoShenParabolicellipticChemotaxisModel2018a,
SalakoShenParabolicellipticChemotaxisModel2018b}
Salako and Shen showed inter alia global existence of solutions to \eqref{prob:log_pe} with $\Omega = \R$
provided $\inf_{x \in \Omega} \mu(x) \gt 1$.

\paragraph{Main results}
Apparently, rigorously proving blow-up in Keller--Segel systems is a difficult problem.
Known proofs for parabolic-parabolic chemotaxis systems strongly rely on certain energy structures 
(\cite{CieslakLaurencotFiniteTimeBlowup2010, HorstmannWangBlowupChemotaxisModel2001, WinklerAggregationVsGlobal2010})
while in the parabolic-elliptic setting additional approaches are moment-type arguments
(\cite{BilerLocalGlobalSolvability1998, NagaiBlowupNonradialSolutions2001})

However, all these methods appear inadequate for chemotaxis systems with logistic source.
In this paper we further simplify \eqref{prob:log_pe} and consider
\begin{align} \label{prob:p} \tag{P}
  \begin{cases}
    u_t = \Delta u - \nabla \cdot (u \nabla v) + \kappa(|x|) u - \mu(|x|) u^p,        & \text{in $\Omega \times (0, T)$}, \\
    0   = \Delta v - \frac{m(t)}{|\Omega|} + u, \quad m(t) \defs \intom u(\cdot, t),  & \text{in $\Omega \times (0, T)$}, \\
    \partial_\nu u = \partial_\nu v = 0,                                              & \text{on $\partial \Omega \times (0, T)$}, \\
    u(\cdot, 0) = u_0,                                                                & \text{in $\Omega$}
  \end{cases}
\end{align}
for given functions $\kappa, \mu, u_0: \Omega \ra \R$ and $T \in (0, \infty]$
where we henceforth fix $R \gt 0$ and $\Omega \defs B_R(0) \subset \R^2$.
Our main results are the following.

\begin{theorem} \label{th:blow_up}
  Let $p \ge 1$, $\alpha \ge 2(p - 1)$, $\mu_1 \gt 0$ and
  suppose that $\kappa, \mu \in C^0([0, R]) \cap C^1((0, R))$ satisfy
  \begin{align} \label{eq:blow_up:cond_kappa_mu}
    \kappa, -\kappa', \mu, \mu' \ge 0 \quad \text{in $(0, R)$}
  \end{align}
  as well as
  \begin{align} \label{eq:blow_up:cond_mu}
    \mu(s) \le \mu_1 s^\alpha \quad \text{for all $s \in [0, R]$}.
  \end{align}
  For any $m_0 \gt 8\pi$ there exist $r_1 \in (0, R)$ and $\tilde m \in (0, m_0)$ such that if
  \begin{align} \label{eq:blow_up:cond_u0}
    0 \le u_0 \in C^0(\ombar) \quad \text{is radially symmetric and radially decreasing} 
  \end{align}
  with
  \begin{align} \label{eq:blow_up:mass_concentration}
    \intom u_0 = m_0
    \quad \text{and} \quad
    \int_{B_{r_1}(0)} u_0 \ge \tilde m,
  \end{align}
  then there exists a classical solution $(u, v)$ to \eqref{prob:p} with initial datum $u_0$ blowing up in finite time;
  that is, there exists $\tmax \in (0, \infty)$ such that
  \begin{align} \label{eq:blow_up:limsup_u}
    \limsup_{t \nea \tmax} \|u(\cdot, t)\|_{L^\infty(\Omega)} = \infty.
  \end{align}
\end{theorem}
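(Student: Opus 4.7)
The plan is to adapt the moment-method of Jäger--Luckhaus / Nagai, in the variant developed by Winkler for chemotaxis systems with logistic source, to the present environment-dependent setting. The crucial new input is that the hypothesis $\mu(s) \le \mu_1 s^\alpha$ with $\alpha \ge 2(p-1)$, in combination with the radial monotonicity of $u$, lets us absorb the logistic dissipation into an essentially constant term. First, standard parabolic theory yields a unique classical solution $(u, v)$ on some maximal interval $[0, \tmax)$ with the usual extensibility criterion, and radial symmetry and monotonicity of $u_0$ propagate to $u(\cdot, t)$ on $[0, \tmax)$ thanks to \eqref{eq:blow_up:cond_kappa_mu} and \eqref{eq:blow_up:cond_u0}. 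Setting $s = r^2 \in [0, R^2]$ and introducing the mass accumulation function
\[
w(s, t) := \tfrac{1}{\pi}\int_{B_{\sqrt s}(0)} u(\cdot, t) \dx, \qquad w_s(s, t) = u(\sqrt s, t) \ge 0,
\]
a direct computation using the Poisson equation for $v$ shows that $w$ classically solves
\[
w_t = 4 s w_{ss} + w w_s - \tfrac{m(t)\, s}{\pi R^2}\, w_s + \int_0^s \kappa(\sqrt\sigma)\, w_s \diff\sigma - \int_0^s \mu(\sqrt\sigma)\, w_s^p \diff\sigma
\]
on $(0, R^2) \times (0, \tmax)$, subject to $w(0, t) = 0$ and $w(R^2, t) = m(t)/\pi$.

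For parameters $s_0 \in (0, R^2)$ small and $\gamma \in (0, 1)$ to be fixed, I define the weighted moment
\[
\phi(t) := \int_0^{s_0} s^{-\gamma}(s_0 - s)\, w(s, t) \ds.
\]
Differentiation in $t$ followed by two integrations by parts---the cutoff $(s_0 - s)$ kills boundary terms at $s = s_0$, while $w(0, t) = 0$ together with $\gamma < 1$ handles the endpoint $0$---yields an identity whose principal term arises from $w w_s$ and has the form $c_0(\gamma) \int_0^{s_0} s^{-\gamma-1}(s_0 - s) w^2 \ds$. The weighted Cauchy--Schwarz bound $\phi(t)^2 \le \tfrac{s_0^{2-\gamma}}{2-\gamma}\int_0^{s_0} s^{-\gamma-1}(s_0 - s) w^2 \ds$, balanced against the non-positive diffusive and advective contributions, produces a lower bound of the form $c_1 \phi(t)^2$ with $c_1 > 0$ precisely when $m_0 > 8\pi$---this is the two-dimensional critical-mass threshold.

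The main obstacle is the absorption of the environment-dependent source contributions. For the dissipation
\[
\mathcal M(t) := \int_0^{s_0} s^{-\gamma}(s_0 - s) \int_0^s \mu(\sqrt\sigma)\, w_s(\sigma, t)^p \diff\sigma \ds,
\]
Fubini and the crude bound $\int_\sigma^{s_0} s^{-\gamma}(s_0 - s) \ds \le C s_0^{2-\gamma}$ reduce the task to controlling $\int_0^{s_0}\mu(\sqrt\sigma)\, w_s^p \diff\sigma$. The assumption \eqref{eq:blow_up:cond_mu} with $\alpha \ge 2(p-1)$ gives $\mu(\sqrt\sigma) \le \mu_1 s_0^{\alpha/2 - (p-1)} \sigma^{p-1}$, and the radial monotonicity of $u$ (equivalently, of $w_s$ in $\sigma$) yields the key pointwise estimate $\sigma\, w_s(\sigma, t) \le \int_0^\sigma w_s(\tau, t) \diff\tau = w(\sigma, t)$, so that $\sigma^{p-1} w_s^p = (\sigma w_s)^{p-1} w_s \le w(\sigma, t)^{p-1} w_s$. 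Since by Grönwall applied to $m'(t) \le \kappa(0) m(t)$ the quantity $w(\cdot, t) \le m(t)/\pi$ is uniformly bounded on every bounded time interval, one arrives at
\[
\int_0^{s_0}\mu(\sqrt\sigma)\, w_s^p \diff\sigma \le \mu_1 s_0^{\alpha/2 - (p-1)} \bigl(\tfrac{m(t)}{\pi}\bigr)^{p-1} w(s_0, t) \le C_\mu
\]
locally in $t$. The growth term from $\kappa$ is easier: since $\kappa \le \kappa(0)$ is bounded, its contribution is majorised by $c_3 \phi(t) + c_4$.

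Collecting these estimates produces an autonomous ODI $\phi'(t) \ge \tfrac{c_1}{2}\, \phi(t)^2 - c_5$ valid on $[0, T^\star]$ for some fixed $T^\star > 0$ depending only on $m_0$, $\kappa(0)$, $s_0$, $\gamma$; this ODI forces $\phi \to \infty$ in finite time as soon as $\phi(0) > \sqrt{2 c_5/c_1}$. One then fixes $s_0$ sufficiently small, sets $r_1 := \sqrt{s_0}/2$, and observes that the concentration condition \eqref{eq:blow_up:mass_concentration} implies $w(s, 0) \ge \tilde m/\pi$ for $s \in [r_1^2, s_0]$, whence $\phi(0) \ge \tfrac{\tilde m}{\pi}\int_{r_1^2}^{s_0} s^{-\gamma}(s_0 - s) \ds$; choosing $\tilde m \in (0, m_0)$ close enough to $m_0$ (and shrinking $r_1$ accordingly) exceeds the blow-up threshold. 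Since $\phi(t) \le C(s_0, \gamma)\|u(\cdot, t)\|_{\leb\infty}$, the finite-time blow-up of $\phi$ is incompatible with $\tmax = \infty$, and \eqref{eq:blow_up:limsup_u} follows from the extensibility criterion.
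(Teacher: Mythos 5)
Your reduction to the scalar problem for $w$, your control of the logistic term via the propagated monotonicity $u_r \le 0$ (equivalently $\sigma\, w_s(\sigma,t) \le w(\sigma,t)$, which is exactly the content of Lemma~\ref{lm:ws_bdd} and does require the sign hypotheses in \eqref{eq:blow_up:cond_kappa_mu} through a nontrivial comparison argument for $u_r$), and the use of $\alpha \ge 2(p-1)$ to trade $\mu(\sqrt\sigma)$ for $\sigma^{p-1}$ are all in line with the paper. The argument breaks down, however, at its central step: the moment functional $\phi(t) = \int_0^{s_0} s^{-\gamma}(s_0-s)\,w(s,t)\ds$ with $\gamma\in(0,1)$ does not work in two dimensions, and the paper explicitly says so in the introduction. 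The obstruction is the diffusion term, which you dismiss as ``non-positive'' and ``balanced''. Writing $\psi(s) \defs s^{1-\gamma}(s_0-s)$, two integrations by parts give
\begin{align*}
  4\int_0^{s_0}\psi(s)\,w_{ss}(s,t)\ds
  = 4 s_0^{1-\gamma}w(s_0,t)
    - 4\gamma(1-\gamma)s_0\int_0^{s_0}s^{-\gamma-1}w(s,t)\ds
    - 4(1-\gamma)(2-\gamma)\int_0^{s_0}s^{-\gamma}w(s,t)\ds,
\end{align*}
and the middle term cannot be controlled from below: near $s=0$ one only has $w(s,t)\sim u(0,t)\,s$, so $\int_0^{s_0}s^{-\gamma-1}w\ds$ is finite for each $t$ but of size comparable to $u(0,t)$, which is precisely the quantity that is supposed to blow up; while absorbing it into the good quadratic term $\tfrac{\gamma}{2}\int_0^{s_0}s^{-\gamma-1}(s_0-s)w^2\ds$ coming from $ww_s$ via Young's inequality produces the divergent integral $\int_0^{s_0}s^{-\gamma-1}\ds$. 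In dimensions $n\ge3$ the diffusion term carries the factor $s^{2-2/n}$ instead of $s$, which is exactly what makes this weight usable there; in $n=2$ it is not.

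The paper's way out is to take the weight $(s_0-s)^\beta$ with $\beta\gt1$ large (following \cite{WinklerHowUnstableSpatial2018}): after two integrations by parts the diffusion contribution becomes $-8\beta\int_0^{s_0}(s_0-s)^{\beta-2}\bigl(s_0-\tfrac{\beta+1}{2}s\bigr)w\ds$, whose kernel changes sign at $s_1=\tfrac{2s_0}{\beta+1}$, so that monotonicity of $w(\cdot,t)$ bounds it below by $-4s_0^\beta w(s_1,t)\ge-\tfrac{2}{\pi}s_0^\beta m_0\ure^{\kappa_1 t}$, a genuinely controlled constant (Lemma~\ref{lm:i1}). Note also that the threshold $8\pi$ does not enter through the sign of the coefficient of $\phi^2$, which is always positive; it enters because the resulting constants only permit $\phi(0)$ to exceed the relevant root of the limiting quadratic when $m_0\gt8\pi$ and $\beta$ is chosen large. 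The core of your ODI derivation would have to be rebuilt around such a weight.
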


\begin{remark}
  To give a more concrete example, 
  the conditions \eqref{eq:blow_up:cond_kappa_mu} and \eqref{eq:blow_up:cond_mu} are for instance fulfilled
  if $p = 2$, $\kappa \ge 0$ is a constant and $\mu(r) = r^2, r \in [0, R]$.
\end{remark}

This result will be complemented by two statements on global solvability.
Firstly, we show at least in the case $\kappa \equiv 0$ the value $8\pi$
-- which does not, as one could have expected, depend on $\alpha$ or $p$
-- is essentially optimal.
\begin{prop} \label{prop:critical_mass}
  Let $\kappa \equiv 0$, $0 \le \mu \in C^0([0, R]) \cap C^1((0, R))$ and $p \ge 1$.
  For any nonnegative radially symmetric $u_0 \in C^0(\ombar)$ with $\intom u_0 \lt 8\pi$ there exists a global classical solution
  $(u, v)$ to \eqref{prob:p} with initial datum $u_0$.
\end{prop}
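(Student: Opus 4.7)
The plan is to follow the classical subcritical-mass route of Nagai, J\"ager--Luckhaus and others for the 2D parabolic--elliptic Keller--Segel system, observing that the absorption term $-\mu(|x|) u^p$, being non-positive, only acts in our favor. Standard local well-posedness yields a maximal classical solution $(u,v)$ on $[0,\tmax)$ with either $\tmax=\infty$ or $\limsup_{t\nea\tmax}\|u(\cdot,t)\|_{L^\infty(\Omega)}=\infty$, and my aim is to exclude the latter. Testing the first equation of \eqref{prob:p} with $1$ and using $\kappa\equiv 0$ together with $\mu u^p\ge 0$ gives $m'(t) = -\intom \mu u^p \le 0$, so $m(t)\le m_0 < 8\pi$ throughout $[0,\tmax)$.

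The central estimate couples a free-energy identity with Moser--Trudinger. Normalising $v$ by $\intom v(\cdot,t)=0$ and setting $\mc F(u) \defs \intom u\log u - \tfrac{1}{2}\intom uv$, a direct computation yields
\begin{align*}
  \ddt \mc F(u(t)) = -\intom u\bigl|\nabla(\log u - v)\bigr|^2 - \intom \mu u^p (\log u + 1 - v).
\end{align*}
The pointwise inequality $s^p(\log s + 1) \ge -(p e^{p+1})^{-1}$ for $s \ge 0$ bounds $-\intom \mu u^p(\log u + 1)$ uniformly from above, while the cross term $\intom \mu u^p v$ is controlled on any interval $[0,T] \subset [0,\tmax)$ by absorbing part of it into the dissipation via Young's inequality and elliptic regularity for $v$. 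Combined with the classical Moser--Trudinger/logarithmic HLS lower bound $\mc F(u(t)) \ge -C(m_0,\Omega)$, which is available precisely because $m(t) < 8\pi$, a Gronwall step yields a uniform $L\log L$-bound on $u$ over every compact subinterval of $[0,\tmax)$.

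From this $L\log L$-bound one bootstraps in the standard way: elliptic regularity upgrades the bound on $v$, testing the $u$-equation with $u^{q-1}$ together with a Gagliardo--Nirenberg inequality delivers $L^q$-estimates for each $q<\infty$, and an Alikakos--Moser iteration then produces an $L^\infty$-bound on $[0,T]$ for every $T<\tmax$, contradicting the blow-up criterion and hence forcing $\tmax = \infty$. The main technical obstacle is taming the cross term $\intom \mu u^p v$ in the energy identity: since $v$ is nonlocal with no definite sign and $\mu u^p$ lacks a priori control, this step requires delicately interleaving elliptic regularity, Young's inequality and the subcritical Moser--Trudinger bound; all other steps follow established lines.
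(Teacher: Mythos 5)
Your free-energy identity is correct as stated, but the argument has a genuine gap exactly where you flag ``the main technical obstacle'': the cross term $\intom \mu u^p v$ cannot be absorbed by the tools you invoke. The dissipation $\intom u|\nabla(\log u - v)|^2$ gives no control over any superlinear quantity in $u$, and elliptic regularity starting from the only a priori information available, $\|u(\cdot,t)\|_{\leb1}\le m_0$, yields $v\in\leb q$ for every finite $q$ but \emph{not} $v\in\leb\infty$ (the Neumann Green's function in 2D has a logarithmic singularity). Hence an upper bound for $\intom \mu u^p v$ requires an a priori bound on $u$ in some $\leb{q}$ with $q\gt 1$ (weighted by $\mu$), which is precisely what you do not have before the bootstrap. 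The natural attempt via entropy duality, $st\le s\log s - s + \ure^t$ with $s=u^p$, $t=v$, produces the term $(p-1)\intom \mu\, u^p\log u$, which is nonnegative, superlinear and uncontrolled for every $p\gt1$; Young's inequality with power weights fares no better. The observation that $\int_0^T\intom\mu u^p = m_0 - m(T)\le m_0$ controls only the space-time integral, not the product with an unbounded $v$. So for general $p\ge1$ and merely nonnegative $\mu$ the Gronwall step does not close, and the subsequent $L\log L\to L^q\to L^\infty$ bootstrap never gets off the ground. (A secondary point: the threshold $8\pi$ for the Moser--Trudinger/log-HLS lower bound on $\mc F$ in a disk with Neumann data is only valid in the radially symmetric class -- for general data the constant is $4\pi$ because of boundary concentration -- so radial symmetry must be invoked explicitly there as well.)

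The paper avoids the energy functional altogether, and this is worth internalizing because the author explicitly notes that energy methods appear inadequate for Keller--Segel systems with logistic-type terms. Instead one passes to the J\"ager--Luckhaus mass variable $w(s,t)=\int_0^{\sqrt s}\rho u(\rho,t)\drho$, which by Lemma~\ref{lm:pde_w} solves a \emph{scalar} parabolic equation in which the degradation term enters as $-2^{p-1}\int_0^s\mu(\sqrt\sigma)w_s^p\dsigma\le 0$, i.e.\ with the favorable sign for supersolutions, so it can simply be discarded. One then checks that $\ol w(s,t)=\frac{as}{b+s}$ is a supersolution whenever $a\le 4$, and the hypothesis $m_0\lt 8\pi$ is exactly what permits choosing $a\in(\frac{m_0}{2\pi},4)$ with $\ol w\ge w$ at $t=0$ and on the lateral boundary $s=R^2$ (where $w=\frac{m(t)}{2\pi}\le\frac{m_0}{2\pi}$). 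Comparison gives $w\le\ol w$, hence a bound on $w_s(0,t)=\frac12 u(0,t)$, and parabolic regularity away from the origin upgrades this to an $\leb\infty$ bound for $u$, contradicting the extensibility criterion. If you want to keep your route, you would at minimum have to restrict to $p$ for which the cross term is tractable; as written, the proposal does not prove the proposition.
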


Secondly, if $p \gt 2$, we prove that for arbitrary initial data global classical solutions exist
provided $\mu$ does not grow too fast.
\begin{prop} \label{prop:global_ex}
  Let $p \gt 2$, $\alpha \lt p - 2$, $\mu_1 \gt 0$ and $\kappa, \mu \in C^0([0, R]) \cap C^1((0, R))$.
  If
  \begin{align} \label{eq:global_ex:cond_mu}
    \mu(s) \ge \mu_1 s^\alpha \quad \text{for all $s \in [0, R]$}
  \end{align}
  then \eqref{prob:p} admits a global classical solution for any nonnegative initial datum $u_0 \in \con0$.
\end{prop}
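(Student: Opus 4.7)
The plan is to rule out finite-time blow-up. I would first invoke standard parabolic theory to produce a maximal classical solution $(u,v)$ on $[0, \tmax)$ together with the extensibility criterion that $\tmax < \infty$ would force $\|u(\cdot,t)\|_{L^\infty(\Omega)} \to \infty$ as $t \nea \tmax$; hence it suffices to exclude $L^\infty$-blow-up, and I would do so in two stages: a uniform $L^{q_0}$-bound for some $q_0 > 1$, followed by a bootstrap to $L^\infty$.

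For the decisive $L^{q_0}$-estimate, I would test the first equation against $u^{q-1}$ and use $\Delta v = m(t)/|\Omega| - u$ to convert the chemotactic integral into a term of the form $\intom u^{q+1}$, arriving at
\begin{equation*}
  \frac{1}{q}\ddt \intom u^q + \frac{4(q-1)}{q^2}\intom |\nabla u^{q/2}|^2 \le \frac{q-1}{q}\intom u^{q+1} + \|\kappa\|_{L^\infty(\Omega)} \intom u^q - \mu_1 \intom |x|^\alpha u^{p+q-1}.
\end{equation*}
The crux is to absorb the chemotactic contribution $\intom u^{q+1}$ into the weighted damping. I would factorize $u^{q+1} = (|x|^\alpha u^{p+q-1})^{(q+1)/(p+q-1)} \cdot |x|^{-\alpha(q+1)/(p+q-1)}$ and apply Hölder's inequality with conjugate exponents $\frac{p+q-1}{q+1}$ and $\frac{p+q-1}{p-2}$ (well-defined precisely because $p > 2$) to obtain
\begin{equation*}
  \intom u^{q+1} \le \Bigl(\intom |x|^\alpha u^{p+q-1}\Bigr)^{\frac{q+1}{p+q-1}} \Bigl(\intom |x|^{-\frac{\alpha(q+1)}{p-2}}\Bigr)^{\frac{p-2}{p+q-1}}.
\end{equation*}
The weight integral is finite over the planar ball $\Omega$ precisely when $\alpha(q+1)/(p-2) < 2$, i.e.\ when $q < 2(p-2)/\alpha - 1$, and the hypothesis $\alpha < p-2$ is exactly what pushes this threshold above $1$, so I can fix some admissible $q_0 > 1$. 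A subsequent Young's inequality would absorb $\intom u^{q_0+1}$ into $\mu_1 \intom |x|^\alpha u^{p+q_0-1}$ modulo an additive constant, and Gronwall's lemma would then yield $\sup_{t \in [0, \tmax)} \intom u^{q_0} < \infty$.

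With an $L^{q_0}$-bound in hand for some $q_0 > 1$, elliptic regularity applied to $-\Delta v = u - m(t)/|\Omega|$ places $\nabla v$ in $L^\infty((0, \tmax); L^{q_1}(\Omega))$ for some $q_1 > 2$ via the planar Sobolev embedding; a standard heat-semigroup bootstrap, iterated along the variation-of-constants formula for $u_t = \Delta u - \nabla \cdot (u \nabla v) + \kappa u - \mu u^p$, would then upgrade $u$ to $L^\infty((0, \tmax) \times \Omega)$, contradicting the blow-up dichotomy. The principal obstacle lies in the $L^{q_0}$-step: the integrability of the singular weight $|x|^{-\alpha(q+1)/(p-2)}$ in $\R^2$ forces $q$ below $2(p-2)/\alpha - 1$, and the strict inequality $\alpha < p-2$ is the minimal requirement that leaves any room above $q = 1$ for this scheme to succeed.
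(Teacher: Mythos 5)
Your argument is correct and follows essentially the same route as the paper's proof: the same testing of the first equation against $u^{q-1}$, the same conversion of the chemotactic term into $\intom u^{q+1}$ via the second equation, the same conjugate exponents $\tfrac{p+q-1}{q+1}$ and $\tfrac{p+q-1}{p-2}$, the same integrability condition $\alpha(q+1)/(p-2) \lt 2$ on the singular weight (which is exactly where $\alpha \lt p-2$ is used, matching the paper's choice $q \in (1, \min\{\tfrac{2p-4-\alpha}{\alpha}, 2\})$), and the same elliptic-regularity/Sobolev/semigroup bootstrap to $L^\infty$. The only cosmetic difference is that the paper applies Young's inequality pointwise where you use H\"older followed by Young, which is equivalent.
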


\paragraph{Plan of the paper}
For the proof of Theorem~\ref{th:blow_up} we will rely on a transformation
introduced by Jäger and Luckhaus in \cite{JagerLuckhausExplosionsSolutionsSystem1992}.
As will be seen in Lemma~\ref{lm:pde_w} below the function $w: [0, R]^2 \times [0, \tmax) \ra \R$ defined by
\begin{align*}
  w(s, t) \defs \int_0^{\sqrt s} \rho u(\rho, t) \diff \rho, \quad s \in [0, R^2], t \in [0, \tmax),
\end{align*}
solves the \emph{scalar} PDI
\begin{align} \label{eq:intro:p_pdi}
          w_t
    &\ge  4s w_{ss}
          + 2 w w_s
          - \frac{m(t)}{|\Omega|} s w_s
          - 2^{p-1} \int_0^s \mu(\sqrt \sigma) w_s^p(\sigma, \cdot) \dsigma
    \quad \text{in $(0, R^2) \times (0, \tmax)$}.
\end{align}

In similar -- but higher dimensional -- settings for certain $s_0, \gamma \gt 0$ the function
\begin{align*}
  \phi: [0, \tmax) \ra \R; \quad t \mapsto \int_0^{s_0} s^{-\gamma} (s-s_0) w(s, t) \ds,
\end{align*}
where $w$ denotes a similar transformed quantity,
has been shown to solve a certain ODI implying finite-time blow-up
(\cite{WinklerCriticalBlowupExponent2018}, \cite{WinklerFinitetimeBlowupLowdimensional2018}).

However, these techniques seem to be insufficient to provide any insights in the two dimensional setting,
as the term stemming from the diffusion can apparently not be dealt with anymore.

Therefore, we follow a different approach.
In order to show finite-time blow-up for \eqref{prob:p} with $\kappa = \mu \equiv 0$ in the planar setting
Winkler (\cite{WinklerHowUnstableSpatial2018}) has recently utilized the function
\begin{align*}
  \phi: [0, \tmax) \ra \R; \quad t \mapsto \int_0^{s_0} (s-s_0)^\beta w(s, t) \ds
\end{align*}
for certain $s_0, \beta \gt 0$ instead.
Most terms in \eqref{eq:intro:p_pdi} can be dealt similarly as in \cite{WinklerHowUnstableSpatial2018}
-- except for the nonlocal term $\int_0^s \mu(\sqrt \sigma) w_s^p(\sigma, \cdot) \dsigma$ which is, of course,
not present if $\mu \equiv 0$.

The main idea for dealing with this integral is to derive a pointwise bound for $w_s$ (Lemma~\ref{lm:ws_bdd})
and then integrate by parts,
where the condition $\alpha \ge 2(p-1)$ is apparently needed in order to able to handle the remaining terms (Lemma~\ref{lm:i4}).

Finally, we will then see by an ODI comparison argument
that for suitably chosen initial data $\phi$ (and hence $u$) cannot exist globally in time.

\section{Preliminaries}
The following statement on local existence, in its essence based on a fixed point argument, is standard.
Hence we may omit a proof here and just refer to, for instance,
\cite{CieslakWinklerFinitetimeBlowupQuasilinear2008} or \cite{TelloWinklerChemotaxisSystemLogistic2007}
for more detailed arguments in similar frameworks.
\begin{lemma} \label{lm:local_ex}
  Let $0 \le u_0 \in C^0(\ombar)$ and $\kappa, \mu \in C^0([0, R]) \cap C^1((0, R))$.
  Then there exist $\tmax \in (0, \infty]$ and a classical solution $(u, v)$ to \eqref{prob:p}
  uniquely determined by
  \begin{align*}
    u &\in C^0(\ombar \times [0, \tmax)) \cap C^{2, 1}(\ombar \times (0, \tmax)), \\
    v &\in \bigcap_{q \gt 2} C^0([0, \tmax); W^{1, q}(\Omega)) \cap C^{2, 0}(\ombar \times (0, \tmax))
  \end{align*}
  and
  \begin{align*}
    \intom v(\cdot, t) = 0 \quad \text{for all $t \in (0, \tmax)$}.
  \end{align*}
  Moreover, this solution is nonnegative in the first component, radially symmetric if $u_0$ is radially symmetric
  and such that if $\tmax \lt \infty$ then
  \begin{align*}
    \limsup_{t \nea \tmax} \|u(\cdot, t)\|_{L^\infty(\Omega)} = \infty.
  \end{align*}
\end{lemma}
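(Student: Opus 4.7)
The plan is a standard Banach fixed-point argument, which I will sketch along the lines of \cite{CieslakWinklerFinitetimeBlowupQuasilinear2008, TelloWinklerChemotaxisSystemLogistic2007}. First, given any $\tilde u \in C^0(\ombar \times [0,T])$ with $0 \le \tilde u \le M$ for constants $T, M > 0$ to be chosen, I would solve the elliptic problem
\begin{align*}
  -\Delta v = \tilde u - \tfrac{1}{|\Omega|} \intom \tilde u, \quad \partial_\nu v|_{\partial \Omega} = 0, \quad \intom v = 0,
\end{align*}
whose right-hand side has mean zero, so that a unique solution $v = v[\tilde u]$ exists. Elliptic $L^q$-regularity yields $v \in L^\infty((0,T); \sob{2}{q})$ for every $q < \infty$, and in particular $\|\nabla v\|_{L^\infty(\Omega \times (0,T))} \le C(M)$ by Sobolev embedding. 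Given this $v$, I would then define $\Phi(\tilde u) \defs u$ as the classical solution of the linear parabolic problem
\begin{align*}
  u_t = \Delta u - \nabla v \cdot \nabla u + \bigl( \kappa(|x|) - \Delta v - \mu(|x|) \tilde u^{p-1} \bigr) u, \quad \partial_\nu u|_{\partial \Omega} = 0, \quad u(\cdot,0) = u_0,
\end{align*}
whose existence and regularity follow from standard linear parabolic theory. Since the reaction is linear in $u$ and the zeroth-order coefficient is bounded, the weak maximum principle yields $u \ge 0$, and a straightforward Gr\"onwall-type argument combined with the maximum principle gives $\|u(\cdot,t)\|_{\leb \infty} \le \|u_0\|_{\leb \infty} \ure^{C(M) T}$, so that $\Phi$ maps the set $X_T \defs \{\tilde u \in C^0(\ombar \times [0,T]) : \tilde u(\cdot,0) = u_0, \ 0 \le \tilde u \le M\}$ into itself whenever $M > \|u_0\|_{\leb \infty}$ is fixed and $T$ is sufficiently small.

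Second, for the contraction property I would consider two candidates $\tilde u_1, \tilde u_2 \in X_T$ and exploit that $v[\tilde u_1] - v[\tilde u_2]$ satisfies an elliptic problem with right-hand side $\tilde u_1 - \tilde u_2$, so that elliptic regularity yields control of $\|\nabla(v[\tilde u_1] - v[\tilde u_2])\|_{L^\infty}$ by $\|\tilde u_1 - \tilde u_2\|_{L^\infty(\Omega \times (0,T))}$. Subtracting the two linear parabolic equations satisfied by $\Phi \tilde u_1$ and $\Phi \tilde u_2$, testing suitably (or applying the variation-of-constants formula together with standard semigroup estimates for the Neumann heat semigroup on $\Omega$), and using the local Lipschitz property of $s \mapsto s^{p-1}$ on $[0, M]$ yields an estimate of the form
\begin{align*}
  \|\Phi \tilde u_1 - \Phi \tilde u_2\|_{L^\infty(\Omega \times (0,T))} \le C(M) T^\delta \|\tilde u_1 - \tilde u_2\|_{L^\infty(\Omega \times (0,T))}
\end{align*}
for some $\delta > 0$. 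Hence choosing $T$ small enough makes $\Phi$ a strict contraction and Banach's fixed-point theorem provides a unique solution $u$; elliptic Schauder theory then lifts $v$ to $C^{2,0}(\ombar \times (0,T))$ and parabolic Schauder theory lifts $u$ to $C^{2,1}(\ombar \times (0,T))$, giving the claimed regularity. The main (but still routine) care point here is ensuring the gain of a small power of $T$ in the estimate of the cross-diffusion term $u \nabla v$; this is standard but requires using smoothing properties of the Neumann heat semigroup on $\nabla \cdot (u (\nabla v[\tilde u_1] - \nabla v[\tilde u_2]))$.

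Finally, I would invoke the usual extension procedure to obtain a maximal existence time $\tmax \in (0, \infty]$: if $\limsup_{t \nea \tmax} \|u(\cdot,t)\|_{\leb \infty} < \infty$, then the above construction can be restarted at a time $t_0 < \tmax$ close to $\tmax$, producing a solution on a strictly larger interval and contradicting maximality. The mean-zero property of $v$ holds by construction at every step. Radial symmetry is a consequence of uniqueness together with the radial dependence of $\kappa$ and $\mu$: any rotation of a solution $(u, v)$ is again a solution with the same (rotation-invariant) initial data, and uniqueness forces it to equal $(u, v)$ itself. Nonnegativity of $u$ has already been noted above. The only genuinely delicate point in this otherwise routine program is the contraction estimate for the chemotactic term, which is nonlocal in its dependence on $\tilde u$ through $v[\tilde u]$; the elliptic $W^{2,q}$-bound above handles it cleanly.
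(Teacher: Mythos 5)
The paper offers no proof of this lemma at all: it explicitly declares the statement standard and refers the reader to \cite{CieslakWinklerFinitetimeBlowupQuasilinear2008} and \cite{TelloWinklerChemotaxisSystemLogistic2007} for the details of the underlying fixed-point argument. Your sketch is precisely that argument, and its overall architecture (elliptic solve for $v[\tilde u]$, linear parabolic solve for $\Phi(\tilde u)$, contraction on a short time interval, bootstrap to classical regularity, extension to a maximal interval, radial symmetry via uniqueness) is the intended one. One step, however, fails as written: for $1 \lt p \lt 2$ the map $s \mapsto s^{p-1}$ is \emph{not} Lipschitz on $[0, M]$ (it is only H\"older of exponent $p-1$ near $s = 0$), so the contraction estimate you describe, which compares the zeroth-order coefficients $\mu \tilde u_1^{p-1}$ and $\mu \tilde u_2^{p-1}$, does not close on this range of $p$, which is admitted by the standing hypothesis $p \ge 1$. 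The standard repair is to keep the entire degradation term as an inhomogeneity, i.e.\ to define $\Phi(\tilde u)$ through $u_t = \Delta u - \nabla \cdot (u \nabla v[\tilde u]) + \kappa u - \mu \tilde u^p$, for which $|\tilde u_1^p - \tilde u_2^p| \le p M^{p-1} |\tilde u_1 - \tilde u_2|$ holds for every $p \ge 1$, and then to recover nonnegativity a posteriori by applying the maximum principle to the fixed point itself, in whose equation $\mu u^{p-1}$ appears as a bounded coefficient. With that modification your argument is exactly the one the paper delegates to the literature.
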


Unless otherwise stated we henceforth fix $u_0 \in \con0$ satisfying \eqref{eq:blow_up:cond_u0}
as well as $\kappa, \mu \in C^0([0, R]) \cap C^1((0, R))$ fulfilling \eqref{eq:blow_up:cond_kappa_mu}
and denote the corresponding solution provided by Lemma~\ref{lm:local_ex} by $(u, v)$ as well as the maximal existence time by $\tmax$.
Finally, we set $m_0 \defs m(0)$ and $\kappa_1 \defs \|\kappa\|_{L^\infty((0, R))}$.

\begin{lemma} \label{lm:mass}
  For all $t \in (0, \tmax)$ the inequalities
  \begin{align*}
    0 \le m(t) \le m_0 \ure^{\kappa_1 t}
  \end{align*}
  hold.
\end{lemma}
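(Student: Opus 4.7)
The plan is to derive an ordinary differential inequality for $m(t)$ by testing the first equation in \eqref{prob:p} with the constant function $1$. Integrating over $\Omega$ and using the homogeneous Neumann boundary condition together with the divergence theorem, the diffusion term $\intom \Delta u$ and the taxis term $\intom \nabla \cdot (u \nabla v)$ both vanish, leaving
\begin{align*}
  m'(t) = \intom \kappa(|x|) u(\cdot,t) \dx - \intom \mu(|x|) u^p(\cdot,t) \dx \quad \text{for all $t \in (0, \tmax)$}.
\end{align*}

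For the upper bound, I would discard the nonpositive logistic contribution using $\mu \ge 0$ (guaranteed by \eqref{eq:blow_up:cond_kappa_mu}) and nonnegativity of $u$ from Lemma~\ref{lm:local_ex}, and then estimate $\kappa(|x|) \le \kappa_1$ pointwise to obtain the linear differential inequality $m'(t) \le \kappa_1 m(t)$. A direct application of Grönwall's lemma with initial value $m(0) = m_0$ then yields $m(t) \le m_0 \ure^{\kappa_1 t}$.

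The lower bound $m(t) \ge 0$ is immediate: Lemma~\ref{lm:local_ex} asserts nonnegativity of $u(\cdot,t)$, so $m(t) = \intom u(\cdot,t) \ge 0$ by monotonicity of the integral.

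There is no real obstacle here; the only point worth noting is that the argument does not rely on the structural hypotheses \eqref{eq:blow_up:cond_mu} or on radial monotonicity of $\kappa$ and $\mu$ beyond their sign, so the lemma holds under the general assumptions fixed before its statement.
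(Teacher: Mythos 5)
Your proposal is correct and follows the same route as the paper: both derive $m' \le \kappa_1 m$ in $(0, \tmax)$ by integrating the first equation (discarding the nonpositive degradation term and bounding $\kappa$ by $\kappa_1$) and conclude by an ODI comparison/Gr\"onwall argument, with the lower bound coming directly from nonnegativity of $u$. Your write-up merely spells out the details that the paper leaves implicit.
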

\begin{proof}
  Nonnegativity of $u$ implies $m \ge 0$
  while an ODI comparison argument yields $m(t) \le m_0 \ure^{\kappa_1 t}$ for $t \gt 0$
  due to $m' \le \kappa_1 m$ in $(0, \tmax)$.
\end{proof}

As mentioned in the introduction the proof of Theorem~\ref{th:blow_up} will rely on transforming \eqref{prob:p} into a scalar equation.
\begin{lemma} \label{lm:pde_w}
  Define
  \begin{align*}
    w(s, t) \defs \int_0^{\sqrt{s}} \rho u(\rho, t) \drho, \quad s \in [0, R^2], t \in [0, \tmax).
  \end{align*}
  Then
  \begin{align} \label{eq:pde_w:w_s_eq_u}
    w_s(s, t) = \tfrac12 u(\sqrt s, t)
  \end{align}
  and
  \begin{align} \label{eq:pde_w:pde}
          w_t(s, t)
    &=    4s w_{ss}(s, t)
          + 2 w(s, t) w_s(s, t)
          - \frac{m(t)}{|\Omega|} s w_s(s, t) \notag \\
    &\pe  + \int_0^s \left(\kappa(\sqrt \sigma) w_s(\sigma, t) - 2^{p-1} \mu(\sqrt \sigma) w_s^p(\sigma, t) \right) \dsigma
  \end{align}
  for $s \in (0, R^2)$ and $t \in (0, \tmax)$.
\end{lemma}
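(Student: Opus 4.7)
The plan is to translate the radially symmetric PDE for $u$ into one for $w$ by the standard Jäger--Luckhaus substitution $s=r^2$, handling each term on the right-hand side of the $u$-equation separately and then matching coefficients. Since $u$ and $v$ are radial by Lemma~\ref{lm:local_ex}, I identify them with functions of $r=|x|\in[0,R]$; the first identity $w_s(s,t)=\tfrac12 u(\sqrt s,t)$ is immediate from the fundamental theorem of calculus and the chain rule (the factor $\sqrt s$ from the integrand cancels the factor $\tfrac{1}{2\sqrt s}$ from $\tfrac{d}{ds}\sqrt s$). A second differentiation gives $w_{ss}(s,t)=\frac{u_r(\sqrt s,t)}{4\sqrt s}$, so $\sqrt s\,u_r(\sqrt s,t)=4s\,w_{ss}(s,t)$; this will produce the diffusion term.

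Next I would integrate the $v$-equation. In radial form the second equation of \eqref{prob:p} reads $\frac{1}{r}(rv_r)_r=\frac{m(t)}{|\Omega|}-u$, which upon integration from $0$ to $r$ yields
\begin{align*}
  r v_r(r,t) = \frac{m(t)}{2|\Omega|}r^2 - \int_0^r \rho u(\rho,t)\drho = \frac{m(t)}{2|\Omega|}r^2 - w(r^2,t).
\end{align*}
This is the only nontrivial algebraic ingredient and encodes that the taxis term becomes \emph{local} in the transformed variable; there is no boundary issue here because both $ru_r$ and $rv_r$ vanish at $r=0$ by radial regularity.

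With these preparations in hand I would differentiate $w$ in $t$ under the integral sign, insert the first equation of \eqref{prob:p} written in radial form
\begin{align*}
  u_t = \frac{1}{r}(r u_r)_r - \frac{1}{r}(r u v_r)_r + \kappa(r) u - \mu(r) u^p,
\end{align*}
multiply by $\rho$ and integrate from $0$ to $\sqrt s$. The diffusion and taxis terms are exact derivatives, so the integral collapses to boundary contributions at $\rho=\sqrt s$: the former gives $\sqrt s\,u_r(\sqrt s,t)=4sw_{ss}$, while the latter gives
\begin{align*}
  -\sqrt s\,u(\sqrt s,t)\,v_r(\sqrt s,t) = -2w_s(s,t)\left(\frac{m(t)}{2|\Omega|}s - w(s,t)\right) = 2ww_s - \frac{m(t)}{|\Omega|}s\,w_s,
\end{align*}
upon using $w_s=\tfrac12 u(\sqrt s,t)$ and the above formula for $rv_r$.

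For the remaining two terms I would substitute $\sigma=\rho^2$, $d\sigma=2\rho\,d\rho$, which converts $\int_0^{\sqrt s}\rho\,\kappa(\rho)u(\rho,t)\drho$ into $\tfrac12\int_0^s\kappa(\sqrt\sigma)u(\sqrt\sigma,t)\dsigma=\int_0^s\kappa(\sqrt\sigma)w_s(\sigma,t)\dsigma$; analogously the logistic term produces $-\tfrac12\int_0^s\mu(\sqrt\sigma)(2w_s(\sigma,t))^p\dsigma=-2^{p-1}\int_0^s\mu(\sqrt\sigma)w_s^p(\sigma,t)\dsigma$, since the factor $2^p$ from $u^p=(2w_s)^p$ combines with the $\tfrac12$ from the Jacobian. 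Summing the four contributions gives exactly \eqref{eq:pde_w:pde}. There is no real obstacle: the only point requiring care is the vanishing of the boundary terms at $\rho=0$, which follows from $u,u_r,v_r$ being bounded near the origin by the regularity asserted in Lemma~\ref{lm:local_ex}; everything else is bookkeeping with the change of variables $s=r^2$.
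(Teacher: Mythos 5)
Your proposal is correct and follows essentially the same route as the paper: the substitution $s=r^2$, the computation of $w_s$ and $w_{ss}$, the integration of the elliptic equation to obtain $rv_r=\frac{m(t)}{2|\Omega|}r^2-w(r^2,\cdot)$, and differentiation under the integral with the divergence terms collapsing to boundary contributions at $\rho=\sqrt s$. (Your sign for the $\kappa$-term even matches the lemma's statement, whereas the paper's displayed intermediate computation carries a harmless sign typo there.)
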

\begin{proof}
  The first two equations in \eqref{prob:p} read in radial form
  \begin{align*}
    u_t &= \frac1r (r u_r - r u v_r)_r + \kappa(r) u - \mu(r) u^p \quad \text{and} \\ 
    0   &= \frac1r (r v_r)_r -\frac{m(t)}{|\Omega|} + u, 
  \end{align*}
  that is
  \begin{align*}
      r v_r(r, \cdot)
    = \int_0^r \left( \frac{m(t)}{|\Omega|} \rho - \rho u(\rho, \cdot) \right) \drho
    = \frac{m(t)}{2|\Omega|} r^2 - w(r^2, \cdot).
  \end{align*}
  Thus, a direct calculation yields 
  \begin{align*}
    w_s(s, t)     &=  \frac{1}{2\sqrt s} \cdot \sqrt s u(\sqrt s, t) = \frac12 u(\sqrt s, t), \\
    w_{ss}(s, t)  &=  \frac12 u_r(\sqrt s, t) \cdot \frac1{2 \sqrt s} = \frac1{4 \sqrt s} u_r(\sqrt s, t) \quad \text{and} \\
    w_t(s, t)     &=  \int_0^{\sqrt s} \frac{\rho}{\rho} [\rho u_r(\rho, t) - \rho u(\rho, t) v_r(\rho, t)]_r \drho
                      + \int_0^{\sqrt s} \rho [\kappa(\rho) u(\rho, t) - \mu(\rho) u^p(\rho, t) ] \drho \\
                  &=  \sqrt s u_r(\sqrt s,  t) - u(\sqrt s, t) \left[ \frac{m(t)}{2|\Omega|} s - w(s, t) \right]
                      - \frac12 \int_0^s \left( \kappa(\sqrt \sigma) u + \mu(\sqrt \sigma) u^p(\sqrt \sigma, t) \right) \dsigma \\
                  &=  4 s w_{ss}(s, t) + 2 w(s, t) w_s(s, t) - \frac{m(t)}{|\Omega|} s w_s(s, t)
                      - \int_0^s \left(\kappa(\sqrt \sigma) w_s + 2^{p-1} \mu(\sqrt \sigma) w_s^p(\sigma, t) \right) \dsigma
  \end{align*}
  for $s \in (0, R^2)$ and $t \in (0, \tmax)$.
\end{proof}

\section{Supercritical mass allows for blow-up}
Crucially relying on transforming \eqref{prob:p} into the scalar equation \eqref{eq:pde_w:pde}
we will prove Theorem~\ref{th:blow_up} at the end of this section.

\subsection{The function $\phi$}
\begin{lemma} \label{lm:phi_first_ode}
  Let $\beta \gt -1$ and $s_0 \in (0, R^2)$.
  The function
  \begin{align*}
    \phi: [0, \tmax) \ra \R, \quad
    t \mapsto \intns (s_0-s)^\beta w(s, t) \ds
  \end{align*}
  belongs to $C^0([0, \tmax)) \cap C^1((0, \tmax))$ and satisfies
  \begin{align} \label{eq:phi_first_ode:ode}
          \phi'(t)
    &\ge  4 \intns (s_0-s)^\beta s w_{ss}(s, t) \ds \notag \\
    &\pe  + 2 \intns (s_0-s)^\beta s w(s, t) w_s(s, t) \ds \notag \\
    &\pe  - \frac{m(t)}{|\Omega|} \intns (s_0-s)^\beta s w_s(s, t) \ds \notag \\
    &\pe  - 2^{p-1} \intns \int_0^s (s_0-s)^\beta \mu(\sqrt{\sigma}) w_s^p(\sigma, t) \dsigma \ds \notag \\
    &\sfed I_1(t) + I_2(t) + I_3(t) + I_4(t)
  \end{align}
  for all $t \in (0, \tmax)$.
\end{lemma}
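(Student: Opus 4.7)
The plan is to obtain both the claimed regularity of $\phi$ and the differential inequality by directly differentiating under the integral sign and substituting the PDE \eqref{eq:pde_w:pde} supplied by Lemma~\ref{lm:pde_w}. Note first that $w \in C^0([0, R^2] \times [0, \tmax))$ (since $u \in C^0(\ombar \times [0, \tmax))$ by Lemma~\ref{lm:local_ex} and $w$ is defined by an integral of $\rho u$), while $w_t$ exists and is continuous on $(0, R^2) \times (0, \tmax)$ thanks to the $C^{2,1}$ regularity of $u$ on $\ombar \times (0, \tmax)$ and the explicit representation of $w_t$ derived in the proof of Lemma~\ref{lm:pde_w}.

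First I would verify that $\phi \in C^0([0, \tmax))$: the map $s \mapsto (s_0-s)^\beta$ is integrable on $(0, s_0)$ because $\beta > -1$, and $w$ is locally uniformly bounded in time, so dominated convergence applies. For the $C^1$ statement on $(0, \tmax)$, I would fix an arbitrary compact subinterval $[t_1, t_2] \subset (0, \tmax)$ and observe that $w_t$ is bounded on $[0, R^2] \times [t_1, t_2]$ (using the representation $w_t(s,t) = \sqrt s\, u_r(\sqrt s, t) - u(\sqrt s, t)[\tfrac{m(t)}{2|\Omega|}s - w(s,t)] - \tfrac12\int_0^s(\kappa u + \mu u^p)\dsigma$ from the proof of Lemma~\ref{lm:pde_w} and the continuity of $u, u_r$ on $\ombar \times [t_1, t_2]$). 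A standard Leibniz-rule argument with this uniform bound as dominating function then gives $\phi'(t) = \int_0^{s_0} (s_0-s)^\beta w_t(s, t)\ds$ for each $t \in (0, \tmax)$, and continuity of this derivative follows by another application of dominated convergence.

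Substituting the identity \eqref{eq:pde_w:pde} into this expression for $\phi'(t)$ splits the integral into the four summands $I_1, I_2, I_3, I_4$ together with the extra contribution
\begin{align*}
  J(t) \defs \intns (s_0-s)^\beta \int_0^s \kappa(\sqrt\sigma) w_s(\sigma, t) \dsigma \ds.
\end{align*}
Since $\kappa \ge 0$ on $[0, R]$ by \eqref{eq:blow_up:cond_kappa_mu} and since $w_s(\sigma, t) = \tfrac12 u(\sqrt\sigma, t) \ge 0$ by \eqref{eq:pde_w:w_s_eq_u} together with the nonnegativity of $u$ from Lemma~\ref{lm:local_ex}, the integrand of $J$ is nonnegative, hence $J(t) \ge 0$. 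Dropping this nonnegative term yields the stated inequality \eqref{eq:phi_first_ode:ode}.

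There is no significant obstacle here; the only subtlety is to ensure that differentiation and integration may be interchanged despite the factor $(s_0-s)^\beta$ possibly being singular at $s = s_0$, which is resolved once one notes that $\beta > -1$ makes this factor integrable and $w_t$ is bounded on $[0, R^2] \times [t_1, t_2]$. The choice to state the result as an inequality rather than an identity is deliberate and anticipates later arguments where the $\kappa$-term plays no role, making it convenient to absorb it into the inequality sign at this stage.
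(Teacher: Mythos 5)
Your proposal is correct and follows essentially the same route as the paper's (much terser) proof: regularity of $\phi$ via dominated convergence and differentiation under the integral sign using $\beta \gt -1$, then substitution of \eqref{eq:pde_w:pde} and discarding the nonnegative $\kappa$-term thanks to $\kappa \ge 0$ and $w_s = \tfrac12 u \ge 0$. No gaps; you have merely spelled out the details the paper leaves implicit.
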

\begin{proof}
  As $w \in C^0(\ombar \times [0, \tmax)) \cap C^1(\ombar \times (0, \tmax))$ by Lemma~\ref{lm:pde_w},
  the asserted regularity of $\phi$ follows from standard Lebesgue integration theory,
  while \eqref{eq:phi_first_ode:ode} is then a direct consequence of Lemma~\ref{lm:pde_w} and nonnegativity of $u$ and $\kappa$.
\end{proof}

Our goal is to show that after an appropriate choice of parameters $\phi$ satisfies a certain ODI,
which then implies finiteness of $\tmax$.
\begin{lemma} \label{lm:ode_blow_up}
  Let $T, \tilde T, c_1, c_2, c_3 \gt 0$.
  If $y \in C^0([0, T)) \cap C^1((0, T))$ satisfies
  \begin{align*}
    \begin{cases}
      y' \ge c_1 y^2 - c_2 y - c_3, \\
      y(0) \ge y_0
    \end{cases}
  \end{align*}
  in $(0, T)$ with
  \begin{align*}
    y_0 \ge \frac{c_2 + \sqrt{c_1 c_3}}{c_1} + \frac1{c_1 \tilde T},
  \end{align*}
  then necessarily $T \le \tilde T$.
\end{lemma}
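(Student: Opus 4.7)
The plan is to reduce the ODI to the canonical blow-up ODE $z' = c_1 z^2$ after an affine shift which annihilates the linear and constant terms up to a favorable sign.

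First I would set $a \defs c_2/c_1$ and $b \defs \sqrt{c_3/c_1}$, so that the threshold in the hypothesis factors as $a + b = (c_2 + \sqrt{c_1 c_3})/c_1$. Writing $y = (a+b) + z$ and expanding gives the algebraic identity
\begin{align*}
  c_1 y^2 - c_2 y - c_3
  = c_1 \left[ z^2 + (a + 2b) z + ab \right],
\end{align*}
which for $z \ge 0$ is bounded below simply by $c_1 z^2$ (since $a, b \ge 0$). This is the key computation, and it is the step where the particular value $(c_2 + \sqrt{c_1 c_3})/c_1$ in the hypothesis becomes essential: it is precisely the shift that kills the linear and constant contributions on the non-negative half-line.

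Second, I would set $z(t) \defs y(t) - (a+b)$. By hypothesis $z(0) \ge 1/(c_1 \tilde T) > 0$. As long as $z(t) \ge 0$, the previous step gives
\begin{align*}
  z'(t) = y'(t) \ge c_1 z(t)^2,
\end{align*}
so $z$ is nondecreasing and thus stays strictly positive; a continuity argument (or a direct maximality argument) shows this inequality persists on all of $(0, T)$.

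Third, I would compare with the explicit solution of the equality case. The ODE $\zeta'(t) = c_1 \zeta(t)^2$ with $\zeta(0) = 1/(c_1 \tilde T)$ is solved by
\begin{align*}
  \zeta(t) = \frac{1}{c_1 (\tilde T - t)}, \qquad t \in [0, \tilde T),
\end{align*}
which blows up as $t \nea \tilde T$. A standard ODE comparison principle applied to $z$ and $\zeta$ (noting $z(0) \ge \zeta(0)$ and $z$ satisfies the differential inequality) yields $z(t) \ge \zeta(t)$ on $[0, \min\{T, \tilde T\})$. Since $\zeta$ is unbounded as $t \nea \tilde T$, so is $z$, and hence also $y$; this forces $T \le \tilde T$, as desired.

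The only substantive step is the first one — spotting the shift $y \mapsto y - (c_2 + \sqrt{c_1 c_3})/c_1$ that reduces the inhomogeneous Riccati-type inequality to the pure quadratic one. Everything else is a standard comparison with the elementary blow-up ODE.
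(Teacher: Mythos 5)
Your proof is correct and follows essentially the same route as the paper: both arguments reduce the Riccati-type inequality to the pure quadratic ODI $z' \ge c_1 z^2$ by subtracting a constant at least as large as the larger root $\lambda_+$ of $c_1 s^2 - c_2 s - c_3$, and then conclude from the finite-time blow-up of that ODE. The only cosmetic difference is that the paper shifts by $\lambda_+$ itself (verifying $\lambda_+ \le \frac{c_2 + \sqrt{c_1 c_3}}{c_1}$) and integrates by separation of variables, whereas you shift directly by $\frac{c_2 + \sqrt{c_1 c_3}}{c_1}$ via an explicit algebraic identity and compare with the exact solution $\zeta(t) = \frac{1}{c_1(\tilde T - t)}$.
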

\begin{proof}
  As
  \begin{align*}
    c_1 s^2 - c_2 s - c_3 = 0
    \quad \text{if and only if} \quad
      s
    = \frac{c_2 \pm \sqrt{c_2^2 + 4 c_1 c_3}}{2 c_1}
    \sfed \lambda_\pm
  \end{align*}
  the ODI implies that $y$ is increasing if and only if $y \le \lambda_-$ or $y \ge \lambda_+$.
  Since
  \begin{align*}
    \lambda_+ \le \frac{c_2 + \sqrt{c_1 c_3}}{c_1} \lt y_0
  \end{align*}
  and
  \begin{align*}
    (s-\lambda_-)(s-\lambda_+) \ge (s-\lambda_+)^2
    \quad \text{for all $s \ge \lambda_+$}
  \end{align*}
  we conclude that $y$ is indeed increasing in $(0, T)$ and satisfies
  \begin{align*}
    y' \ge c_1 (y - \lambda_+)^2
  \end{align*}
  in $(0, T)$.

  Hence by integrating we obtain
  \begin{align*}
        t
    =   \int_0^t 1 \ds 
    \le \int_{y(0)}^{y(t)} \frac1{c_1 (y - \lambda_+)^2}
    \le \frac1{c_1 (y_0 - \lambda_+)} - \frac1{c_1 (y(t) - \lambda_+)}
    \lt \tilde T - 0
    =   \tilde T
    \quad \text{for all $t \in (0, T)$},
  \end{align*}
  which is absurd for $T \gt \tilde T$.
\end{proof}

Apart from the nonlocal term in \eqref{lm:phi_first_ode} all integrals therein as well as $\phi(0)$
can be estimated as in \cite[Lemma~3.2]{WinklerHowUnstableSpatial2018}.
For sake of completeness we nonetheless give short proofs for the following lemmata.
\begin{lemma} \label{lm:phi_0}
  Let $\beta \gt -1$ and $s_0 \in (0, R^2)$
  as well as $\tilde m \in (0, m)$ and $\lambda \in (0, 1)$.
  If
  \begin{align*}
    \int_{B_{r_1}(0)} u_0 \ge \tilde m
  \end{align*}
  with $r_1 \defs (\lambda s_0)^2$,
  then
  \begin{align*}
    \phi(0) \ge \frac{\tilde m}{2 \pi (\beta+1)} ((1-\lambda) s_0)^{\beta+1}.
  \end{align*}
\end{lemma}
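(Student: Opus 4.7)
The plan is to exploit that $u_0$ is radially symmetric so that the hypothesized mass concentration translates directly into a lower bound on $w(\cdot,0)$ at the point $s = \lambda s_0$, and then to use the monotonicity of $s \mapsto w(s,0)$ to propagate this bound over a full interval on which $(s_0-s)^\beta$ is still of order $s_0^\beta$.

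First I would rewrite the hypothesis in polar coordinates: since $u_0$ is radial,
\begin{align*}
    \tilde m \le \int_{B_{r_1}(0)} u_0 = 2\pi \int_0^{r_1} \rho u_0(\rho) \drho = 2\pi w(r_1^2, 0),
\end{align*}
so with the choice of $r_1$ corresponding to $r_1^2 = \lambda s_0$ one obtains $w(\lambda s_0, 0) \ge \tilde m/(2\pi)$. Next, the identity $w_s(\cdot,0) = \tfrac12 u_0(\sqrt{\cdot}) \ge 0$ from Lemma~\ref{lm:pde_w} shows that $s \mapsto w(s,0)$ is nondecreasing, so
\begin{align*}
    w(s, 0) \ge \frac{\tilde m}{2\pi} \quad \text{for all } s \in [\lambda s_0, s_0].
\end{align*}

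With this in hand, the conclusion follows by restricting the domain of integration in the definition of $\phi(0)$ and computing an elementary integral:
\begin{align*}
    \phi(0) \ge \int_{\lambda s_0}^{s_0} (s_0-s)^\beta w(s, 0) \ds \ge \frac{\tilde m}{2\pi} \int_{\lambda s_0}^{s_0} (s_0-s)^\beta \ds = \frac{\tilde m}{2\pi(\beta+1)} ((1-\lambda) s_0)^{\beta+1},
\end{align*}
where the final equality is obtained via the antiderivative $-(s_0-s)^{\beta+1}/(\beta+1)$, which is valid because $\beta > -1$.

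There is no real obstacle here: the only slightly delicate point is getting the relationship between $r_1$ and $s_0$ correct so that the mass concentration in the ball $B_{r_1}(0)$ (a region in physical space) matches the interval $[0,\lambda s_0]$ in the transformed variable $s = \rho^2$, but this is exactly what the definition of $w$ arranges. Everything else is monotonicity of $w$ and a one-line computation of a power integral.
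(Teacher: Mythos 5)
Your proof is correct and essentially identical to the paper's: both restrict the integral to $[\lambda s_0, s_0]$, use monotonicity of $w(\cdot,0)$ to bound the integrand below via $w(\lambda s_0,0)\ge \tilde m/(2\pi)$, and evaluate the remaining power integral using $\beta>-1$. You also rightly flag the relation between $r_1$ and $s_0$: the argument needs $r_1^2=\lambda s_0$, i.e.\ $r_1=(\lambda s_0)^{1/2}$, which quietly corrects what appears to be a typo in the statement's definition $r_1 \defs (\lambda s_0)^2$.
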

\begin{proof}
  Set $s_1 \defs \lambda s_0$. As $w_0$ is increasing (due to $u_0 \ge 0$) we have
  \begin{align*}
          \phi(0)
    &=    \intns (s_0-s)^\beta w_0(s) \ds \\
    &\ge  \int_{s_1}^{s_0} (s_0-s)^\beta w_0(s_1) \ds \\
    &=    \int_0^{\sqrt{s_1}} \rho u_0(\rho) \drho \int_{s_1}^{s_0} (s_0-s)^\beta \ds  \\
    &\ge  \frac{\tilde m}{2 \pi} \cdot \frac{((1-\lambda) s_0)^{\beta+1}}{\beta+1}.
    \qedhere
  \end{align*}
\end{proof}

\begin{lemma} \label{lm:i1}
  Let $\beta \gt 1$ and $s_0 \in (0, R^2)$.
  Then for all $t \in (0, \tmax)$
  \begin{align} \label{eq:i1:statement}
    I_1(t) \ge -\frac{2}{\pi} s_0^\beta m_0 \ure^{\kappa_1 t}
  \end{align}
  holds, where $I_1$ is defined in \eqref{eq:phi_first_ode:ode}.
\end{lemma}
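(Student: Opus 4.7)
The plan is to obtain the claimed bound on
\[
I_1(t) = 4\int_0^{s_0}(s_0-s)^\beta \, s\, w_{ss}(s,t)\,\mathrm{d}s
\]
by a single integration by parts, followed by discarding a nonnegative term and a crude but sufficient pointwise estimate on $(s_0-s)^\beta$.

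First I would integrate by parts in $s$, taking $(s_0-s)^\beta s$ as the factor to be differentiated and $w_{ss}$ as the factor to be integrated. Since $\beta > 1$, the function $s \mapsto (s_0-s)^\beta s$ vanishes both at $s=0$ and at $s=s_0$, so the boundary terms disappear and one obtains
\[
I_1(t) = 4\beta \int_0^{s_0}(s_0-s)^{\beta-1} s\, w_s(s,t)\,\mathrm{d}s
        - 4 \int_0^{s_0}(s_0-s)^{\beta} w_s(s,t)\,\mathrm{d}s.
\]
The identity $w_s = \tfrac12 u(\sqrt{\cdot},t) \ge 0$ from Lemma~\ref{lm:pde_w}, combined with the nonnegativity of $(s_0-s)^{\beta-1} s$ on $(0,s_0)$, makes the first of these two integrals nonnegative, so it can be dropped.

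It then remains to estimate the second integral from above. The crude pointwise bound $(s_0-s)^\beta \le s_0^\beta$ for $s \in [0, s_0]$, together with the monotonicity of $w(\cdot, t)$ (again because $w_s \ge 0$) and the identity $w(R^2, t) = \frac{1}{2\pi}\int_\Omega u(\cdot,t) = \frac{m(t)}{2\pi}$, yields
\[
\int_0^{s_0}(s_0-s)^{\beta} w_s(s,t)\,\mathrm{d}s
\le s_0^\beta \bigl(w(s_0,t)-w(0,t)\bigr)
\le s_0^\beta \cdot \frac{m(t)}{2\pi}.
\]
Combining this with the mass bound $m(t) \le m_0 \mathrm{e}^{\kappa_1 t}$ provided by Lemma~\ref{lm:mass} gives
\[
I_1(t) \ge -4 \cdot \frac{m(t)\, s_0^\beta}{2\pi} \ge -\frac{2}{\pi} s_0^\beta m_0 \mathrm{e}^{\kappa_1 t},
\]
which is precisely \eqref{eq:i1:statement}.

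There is no real obstacle here, since all three ingredients (integration by parts with vanishing boundary data, nonnegativity of $w_s$, and the mass control of Lemma~\ref{lm:mass}) are already in hand. The only point to watch is verifying that the boundary terms from the integration by parts vanish, which is the reason the hypothesis $\beta > 1$ appears (so that $(s_0-s)^\beta$ decays fast enough at $s = s_0$ for the argument to go through cleanly).
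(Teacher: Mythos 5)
Your proof is correct, and it takes a genuinely leaner route than the paper. Both arguments begin with the same first integration by parts (with the same vanishing boundary term), arriving at
\[
  I_1(t) = 4\beta \intns (s_0-s)^{\beta-1} s\, w_s(s,t) \ds - 4 \intns (s_0-s)^{\beta} w_s(s,t) \ds,
\]
but from here you simply discard the nonnegative first integral and bound the second via $(s_0-s)^\beta \le s_0^\beta$ and $\intns w_s = w(s_0,t) \le w(R^2,t) = \frac{m(t)}{2\pi}$. The paper instead integrates by parts a second time, rewrites the kernel as $-8\beta(s_0-s)^{\beta-2}\bigl(s_0 - \frac{\beta+1}{2}s\bigr)$, exploits the sign change of that kernel at $s_1 = \frac{2s_0}{\beta+1}$ together with the monotonicity of $w(\cdot,t)$ to replace $w(s,t)$ by $w(s_1,t)$, and then evaluates the kernel integral exactly; both routes land on the same constant $\frac{2}{\pi}$. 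What your version buys is simplicity and a weaker hypothesis: your argument only needs $\beta \gt 0$ (the boundary term at $s=s_0$ already vanishes then, since $w_s = \tfrac12 u(\sqrt{\cdot}\,,t)$ is bounded), whereas the paper's second integration by parts genuinely requires $\beta \gt 1$ for $(s_0-s)^{\beta-2}$ to be integrable. Your closing remark slightly misattributes the role of $\beta \gt 1$ -- it is not needed for your boundary terms -- but since $\beta \gt 1$ is a hypothesis of the lemma this is harmless.
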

\begin{proof}
  By integrating by parts twice
  we obtain for $t \in (0, \tmax)$
  \begin{align*}
          I_1(t)
    &=    4 \intns (s_0-s)^\beta s w_{ss}(s, t) \ds \\
    &=    4 \intns \left( \beta (s_0-s)^{\beta-1} s - (s_0-s)^\beta \right) w_s(s, t) \ds + 0 \\
    &=    4 \intns (s_0-s)^{\beta-1} \left( (\beta+1) s - s_0 \right)  w_s(s, t) \ds \\
    &=    4 \intns (s_0-s)^{\beta-2} \left[ (\beta-1) \left( (\beta+1) s - s_0 \right) - (\beta+1) (s_0-s) \right] w(s, t) \ds \\
    &=    - 8 \beta \intns (s_0-s)^{\beta-2} \left(s_0 - \frac{\beta+1}{2} s\right) w(s, t) \ds.
  \intertext{As $w(\cdot, t)$ is nonnegative and increasing by \eqref{eq:pde_w:w_s_eq_u} and Lemma~\ref{lm:mass},
  noting that $s_0 - \frac{\beta+1}{2} s \le 0$ if and only if $s \ge s_1 \defs \frac{2s_0}{\beta+1} \in (0, s_0)$,
  we conclude
  }
          I_1(t)
    &\ge  - 8 \beta \intns (s_0-s)^{\beta-2} \left(s_0 - \frac{\beta+1}{2} s\right) w(s_1, t) \ds \\
    &=    - 4 s_0^\beta w(s_1, t).
  \end{align*}
  Because the definition of $w$ and Lemma~\eqref{lm:mass} warrant that
  \begin{align*}
        w(s_1, t)
    \le w(R^2, t)
    =   \frac{m(t)}{2 \pi}
    \le \frac{m_0 \ure^{\kappa_1 t}}{2 \pi}
    \quad \text{for $t \in (0, \tmax)$}
  \end{align*}
  a consequence thereof is \eqref{eq:i1:statement}.
\end{proof}

\begin{lemma} \label{lm:i2_i3}
  Let $\beta \gt 0$, $s_0 \in (0, R^2)$ and $\eta \in (0, 1)$.
  With $I_2$ and $I_3$ as in \eqref{eq:phi_first_ode:ode}
  \begin{align} \label{eq:i2_i3:statement}
        I_2(t) + I_3(t)
    \ge (1-\eta) \frac{\beta(\beta+2)}{s_0^{\beta+2}} \phi^2(t)
        - \frac{m_0^2 \ure^{2\kappa_1 t}}{2 \eta (\beta+1)(\beta+2) |\Omega|^2 } s_0^{\beta+2}
  \end{align}
  holds then for all $t \in (0, \tmax)$.
\end{lemma}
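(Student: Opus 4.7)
The plan is to eliminate the $w_s$ derivatives from $I_2 + I_3$ by exploiting the fact that $M \defs m(t)/|\Omega|$ is independent of $s$, so that the combined integrand satisfies
\[
  2 s w w_s - M s w_s = s \, \partial_s \bigl( w - \tfrac{M}{2} \bigr)^2.
\]
A single integration by parts, with vanishing boundary contributions because $(s_0-s)^\beta s$ vanishes at both endpoints for $\beta > 0$, then yields
\[
  I_2(t) + I_3(t) = \intns (s_0-s)^{\beta-1} \bigl[(\beta+1) s - s_0\bigr] \bigl( w - \tfrac{M}{2} \bigr)^2 \ds,
\]
which upon writing $(\beta+1)s - s_0 = \beta s_0 - (\beta+1)(s_0-s)$ decomposes into a dominant positive piece $\beta s_0 \intns (s_0-s)^{\beta-1}(w - M/2)^2 \ds$ and a remainder $-(\beta+1) \intns (s_0-s)^\beta (w - M/2)^2 \ds$.

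For the main term I would apply the Cauchy--Schwarz inequality to
\[
  \tilde\phi(t) \defs \intns (s_0-s)^\beta \bigl( w - \tfrac{M}{2} \bigr) \ds = \phi(t) - \frac{M s_0^{\beta+1}}{2(\beta+1)}
\]
with the symmetric weight split $(s_0-s)^{(\beta+1)/2} \cdot (s_0-s)^{(\beta-1)/2}$, which gives $\intns (s_0-s)^{\beta-1}(w - M/2)^2 \ds \ge \tfrac{\beta+2}{s_0^{\beta+2}} \tilde\phi(t)^2$. A Young-type estimate on $(\phi - X)^2$ with parameter $\eta$, used on the shift $X = \tfrac{M s_0^{\beta+1}}{2(\beta+1)}$, then separates the linear cross term off the main $\phi^2$ contribution and produces the announced $(1-\eta) \tfrac{\beta(\beta+2)}{s_0^{\beta+2}} \phi^2$ summand plus an $M^2$-error.

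The negative remainder is controlled by a direct pointwise bound on $|w - M/2|$, using $w \le m(t)/(2\pi)$ combined with the estimate $m(t) \le m_0 \ure^{\kappa_1 t}$ from Lemma~\ref{lm:mass}; this yields the announced error proportional to $m_0^2 \ure^{2\kappa_1 t} s_0^{\beta+2}/|\Omega|^2$. A helpful auxiliary observation is that the constant $M^2/4$ arising from expanding $(w - M/2)^2 = w^2 - Mw + M^2/4$ contributes nothing at all, because $\intns (s_0-s)^{\beta-1}[(\beta+1)s - s_0] \ds = 0$ by direct computation. The main technical obstacle will be tracking all the constants through the Cauchy--Schwarz and Young steps so that they combine precisely into the form claimed; in particular, the Young parameter $\eta$ must be tuned so that the residual cross term involving $\phi$ and $M$ is absorbed into the main $\phi^2$ piece with the prefactor $(1-\eta)$ exactly as stated.
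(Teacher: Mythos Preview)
Your approach rests on a misprint in the paper's display of $I_2$ in Lemma~\ref{lm:phi_first_ode}: there the integrand carries an extra factor $s$, but comparing with the PDE \eqref{eq:pde_w:pde} (where the relevant term is $2ww_s$, not $2sww_s$) the correct definition is
\[
  I_2(t) = 2\intns (s_0-s)^\beta\, w(s,t)\, w_s(s,t)\,ds,
\]
and this is what the paper's own proof silently uses. Your identity $2sww_s - Msw_s = s\,\partial_s(w-M/2)^2$ needs that spurious $s$; with the correct $I_2$ the combined integrand $2ww_s - Msw_s$ is not a total $s$-derivative of a single quadratic in $w$, so the scheme breaks at the first step. (Even granting the misprinted $I_2$, your dominant term would carry the prefactor $\beta(\beta+2)/s_0^{\beta+1}$ rather than $\beta(\beta+2)/s_0^{\beta+2}$, so the constants would not match the stated inequality once $s_0>1$.)

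The paper instead handles $I_2$ and $I_3$ separately. Since $2ww_s=(w^2)_s$, a single integration by parts gives $I_2=\beta\intns(s_0-s)^{\beta-1}w^2\,ds$. For $I_3$, an integration by parts followed by Young's inequality with parameter $\eta$ on the cross term $\tfrac{\beta m(t)}{|\Omega|}\intns(s_0-s)^{\beta-1}s\,w\,ds$ produces a piece $\eta\beta\intns(s_0-s)^{\beta-1}w^2\,ds$, which is absorbed into $I_2$, together with the $m(t)^2$-error appearing in the statement. Finally, Cauchy--Schwarz with the splitting $(s_0-s)^\beta=(s_0-s)^{(\beta+1)/2}(s_0-s)^{(\beta-1)/2}$ bounds $(1-\eta)\beta\intns(s_0-s)^{\beta-1}w^2\,ds$ from below by $\phi^2$ with exactly the factor $(\beta+2)/s_0^{\beta+2}$.
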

\begin{proof}
  Let $t \in (0, \tmax)$.
  An integration by parts yields
  \begin{align*}
        I_2(t)
    &=  2\intns (s_0-s)^\beta w(s, t) w_s(s, t) \ds \\
    &=  \intns (s_0-s)^\beta (w^2)_s(s, t) \ds \\
    &=  \beta \intns (s_0-s)^{\beta-1} w^2(s, t) \ds + \left[ (s_0-s)^\beta w^2(s, t) \right]_0^{s_0} \\
    &=  \beta \intns (s_0-s)^{\beta-1} w^2(s, t) \ds
  \end{align*}
  while by another integration by parts and Young's inequality we have
  \begin{align*}
          I_3(t)
    &=    - \frac{m(t)}{|\Omega|} \intns (s_0-s)^\beta s w_s(s, t) \ds \\
    &=    \frac{m(t)}{|\Omega|} \intns (s_0-s)^\beta w(s, t) \ds
          - \frac{\beta m(t)}{|\Omega|} \intns (s_0-s)^{\beta-1} s w(s, t) \ds
          + 0 \\
    &\ge  0
          - \eta \beta \intns (s_0-s)^{\beta-1} w^2(s, t) \ds
          - \frac{\beta m^2(t)}{4\eta |\Omega|^2} \intns (s_0-s)^{\beta-1} s^2 \ds \\
    &\ge  - \eta \beta \intns (s_0-s)^{\beta-1} w^2(s, t) \ds
          - \frac{m_0^2 \ure^{2\kappa_1 t}}{2 \eta (\beta+1)(\beta+2) |\Omega|^2 } s_0^{\beta+2}.
  \end{align*}
  As also by Hölder's inequality
  \begin{align*}
          \phi(t)
    &=    \intns (s_0-s)^\beta w(s, t) \ds
    \le   \left(\intns (s_0-s)^{\beta+1} \ds \right)^\frac12
          \left(\intns (s_0-s)^{\beta-1} w^2(s, t) \ds \right)^\frac12,
  \end{align*}
  that is, 
  \begin{align*}
          \phi^2(t)
    &\le  \frac{s_0^{\beta+2}}{\beta+2} \intns (s_0-s)^\beta w^2(s, t) \ds,
  \end{align*}
  we conclude \eqref{eq:i2_i3:statement}.
\end{proof}

\subsection{The fourth integral}
In order to be able to advantageously integrate by parts in the nonlocal term in \eqref{lm:phi_first_ode}
we first derive a pointwise bound for $w_s$, which in turn is prepared by the following two lemmata.

\begin{lemma} \label{lm:v_rr_le_u}
  In $(0, R) \times (0, \tmax)$ the inequality $-v_{rr} \le u$ holds.
\end{lemma}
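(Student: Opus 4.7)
The plan is to exploit the explicit formula for $v_r$ already recorded in the proof of Lemma~\ref{lm:pde_w}, namely
\begin{align*}
  r v_r(r, t) = \frac{m(t)}{2 |\Omega|} r^2 - w(r^2, t) \quad \text{for $r \in (0, R)$, $t \in (0, \tmax)$},
\end{align*}
together with the radial form of the second equation in \eqref{prob:p},
\begin{align*}
  v_{rr}(r, t) + \frac{v_r(r, t)}{r} = \frac{m(t)}{|\Omega|} - u(r, t).
\end{align*}

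First I would divide the formula for $rv_r$ by $r^2$ to obtain
\begin{align*}
  \frac{v_r(r, t)}{r} = \frac{m(t)}{2|\Omega|} - \frac{w(r^2, t)}{r^2}
  \le \frac{m(t)}{2|\Omega|},
\end{align*}
where the inequality uses nonnegativity of $w$, which in turn follows from $u \ge 0$ (Lemma~\ref{lm:local_ex}) and the definition of $w$ in Lemma~\ref{lm:pde_w}.

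Substituting this bound into the rearranged radial identity $-v_{rr} = u - \frac{m(t)}{|\Omega|} + \frac{v_r}{r}$ then yields
\begin{align*}
  -v_{rr}(r, t) \le u(r, t) - \frac{m(t)}{|\Omega|} + \frac{m(t)}{2|\Omega|} = u(r, t) - \frac{m(t)}{2|\Omega|} \le u(r, t),
\end{align*}
since $m(t) \ge 0$ by Lemma~\ref{lm:mass}, which is the desired inequality.

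There is no substantial obstacle here: all required ingredients (the radial representation of $v_r$, nonnegativity of $u$ and $m$) are already available, and the argument is a one-line computation once one writes out the radial elliptic equation. The only point worth noting is that the bound actually comes with room to spare, namely $-v_{rr} \le u - \frac{m(t)}{2|\Omega|}$, which will however not be needed in the sequel.
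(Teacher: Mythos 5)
Your proof is correct and follows essentially the same route as the paper: both arguments reduce to the bound $\tfrac{v_r}{r} \le \tfrac{m(t)}{2|\Omega|}$ (you read it off from the explicit formula $r v_r = \tfrac{m(t)}{2|\Omega|} r^2 - w(r^2,\cdot)$ and $w \ge 0$, the paper obtains it by integrating $(r v_r)_r \le r \tfrac{m(t)}{|\Omega|}$, which is the same computation) and then substitute into the radial form of the elliptic equation.
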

\begin{proof}
  As $u \ge 0$ we have by the second equation in \eqref{prob:p}
  \begin{align*}
    (r v_r(r, t))_r \le r \frac{m(t)}{|\Omega|} 
    \quad \text{for $(r, t) \in (0, R) \times (0, \tmax)$},
  \end{align*}
  hence upon integrating
  \begin{equation*}
    v_r(r, t) \le \frac{r}{2} \frac{m(t)}{|\Omega|}
    \quad \text{for $(r, t) \in (0, R) \times (0, \tmax)$}.
  \end{equation*}
  Again by the second equation in \eqref{prob:p} we have $v_{rr} = \frac{m(t)}{|\Omega|} - u - \frac1r v_r$
  such that a direct consequence thereof is $v_{rr} \ge -u$.
\end{proof}

\begin{lemma} \label{lm:ur_le_0}
  Throughout $(0, R) \times (0, \tmax)$ we have $u_r \le 0$.
\end{lemma}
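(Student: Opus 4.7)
My plan is to apply a parabolic comparison argument to $z \defs u_r$. By the radial symmetry provided by Lemma~\ref{lm:local_ex} and the Neumann boundary condition, $z$ vanishes at $r=0$ and $r=R$ for every $t \in (0, \tmax)$, while the monotonicity assumed in \eqref{eq:blow_up:cond_u0} yields $z \le 0$ initially. It therefore suffices to derive a linear parabolic inequality $z_t \le Lz$ and then invoke the maximum principle.

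To obtain this PDI, I would first rewrite the first equation of \eqref{prob:p} in radial form as
\begin{align*}
  u_t = u_{rr} + \frac{u_r}{r} - u_r v_r - u v_{rr} - \frac{u v_r}{r} + \kappa(r) u - \mu(r) u^p,
\end{align*}
and differentiate with respect to $r$. The resulting equation contains a $-u v_{rrr}$ term which I plan to eliminate by differentiating the radial form of the second equation, $v_{rr} + v_r/r = m(t)/|\Omega| - u$, once in $r$ to obtain $v_{rrr} = -z - v_{rr}/r + v_r/r^2$. Upon this substitution the $v_{rr}/r$ and $v_r/r^2$ contributions cancel, leaving
\begin{align*}
      z_t
  =   z_{rr}
      + \left(\tfrac{1}{r} - v_r\right) z_r
      + \left[-\tfrac{1}{r^2} - \tfrac{v_r}{r} - 2 v_{rr} + u + \kappa - p \mu u^{p-1}\right] z
      + \kappa'(r) u - \mu'(r) u^p
\end{align*}
in $(0, R) \times (0, \tmax)$.

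The sign conditions \eqref{eq:blow_up:cond_kappa_mu} then ensure that the inhomogeneity $\kappa'(r) u - \mu'(r) u^p$ is nonpositive, while Lemma~\ref{lm:v_rr_le_u} bounds $-2 v_{rr}$ from above by $2u$, rendering the zeroth-order coefficient of $z$ bounded from above on any compact subset $[\delta, R] \times [0, t_0]$ with $\delta \in (0, R)$ and $t_0 \in (0, \tmax)$. A standard parabolic maximum principle---or, equivalently, the substitution $\tilde z \defs \ure^{-\lambda t} z$ with $\lambda$ chosen large enough to absorb the zeroth-order coefficient, followed by a Gronwall argument applied to $\int_0^R r \tilde z_+^2(r, t) \dr$---then forces $z \le 0$ everywhere.

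The difficulties I anticipate are mostly technical: the coefficients of the PDI for $z$ exhibit $1/r$ and $-1/r^2$ singularities at $r = 0$, and the assumption $u_0 \in \con0$ means that $z(\cdot, 0) \le 0$ has to be interpreted in a limiting sense. Both issues should be treatable by routine approximation---the singularity at $r = 0$ by exhausting $(0, R)$ with intervals $[\delta, R]$ and exploiting $z = O(r)$ near the origin (which follows from smoothness of $u$ in Cartesian coordinates), and the low initial regularity by mollifying $u_0$ into $C^1$ radially nonincreasing data and invoking continuous dependence for \eqref{prob:p}. The substantive step, and apparently the reason Lemma~\ref{lm:v_rr_le_u} was prepared immediately beforehand, is that it supplies exactly the upper bound on $-2 v_{rr}$ needed to tame the $v_{rr}$ contribution after the $v_{rrr}$-elimination described above.
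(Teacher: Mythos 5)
Your proposal is correct and follows essentially the same route as the paper: a linear parabolic inequality for $z = u_r$ whose inhomogeneity $\kappa' u - \mu' u^p$ is nonpositive by \eqref{eq:blow_up:cond_kappa_mu}, with Lemma~\ref{lm:v_rr_le_u} taming the zeroth-order coefficient and a comparison principle finishing the argument. The only cosmetic difference is that the paper substitutes $\Delta v = \frac{m(t)}{|\Omega|} - u$ \emph{before} differentiating in $r$ (so no $v_{rrr}$ ever appears), whereas you differentiate first and then eliminate $v_{rrr}$; the resulting equations for $z$ coincide, and your anticipated technical issues (regularity of $u$, low regularity of $u_0$) are handled in the paper exactly as you suggest, via Lieberman's estimate and an approximation of the initial data.
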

\begin{proof}
  Without loss of generality we may assume that $u_0 \in C^2(\ombar)$ with $\partial_\nu u_0 = 0$ on $\partial \Omega$,
  as for less regular initial data the statement follows by an approximation procedure
  as in \cite[Lemma~2.2]{WinklerCriticalBlowupExponent2018}.

  Since additionally $\sup_{(x, t) \in [0, R] \times [0, T]} |\nabla v(x, t)| \lt \infty$
  by elliptic regularity theory (cf.\ \cite[Theorem~19.1]{FriedmanPartialDifferentialEquations1976}) for all $T \in (0, \tmax)$
  we may invoke \cite[Theorem~1.1]{LiebermanHolderContinuityGradient1987} to obtain
  \begin{align*} 
    u \in C^{1, 0}(\ol \Omega \times [0, \tmax)) \cap C^{3, 1}(\ol \Omega \times (0, \tmax)).
  \end{align*}
  Hence, fixing $T \in (0, \tmax)$ and letting $Q_T \defs [0, R] \times [0, T]$,
  the function $z \defs u_r|_{Q_T}$
  belongs to $C^{0}(Q_T)$ as well as to $C^{2, 1}([0, R] \times (0, T))$
  and satisfies,
  due to $u_t = u_{rr} + \frac1r u_r - u_r v_r - u \left(\frac{m(t)}{|\Omega|} - u\right) + \kappa(r) u - \mu(r) u^p$ in $Q_T$,
  \begin{align*}
    z_t = z_{rr} + a(r, t) z_r + b(r, t) z + c(r, t)
    \quad \text{in $Q_T$},
  \end{align*}
  wherein
  \begin{align*}
    a(r, t) &\defs  \frac1r - v_r(r, t), \\
    b(r, t) &\defs  -\frac1{r^2} - v_{rr}(r, t) - \frac{m(t)}{|\Omega|} + 2u(r, t) + \kappa(r) - p\mu(r) u^{p-1} \quad \text{and} \\
    c(r, t) &\defs  \kappa'(r) u - \mu'(r) u^p(r, t)
  \end{align*}
  for $(r, t) \in Q_T$.
  
  As $\kappa' \le 0$ and $\mu' \ge 0$ by \eqref{eq:blow_up:cond_kappa_mu},
  $u_r(0, \cdot) = 0$ due to radial symmetry,
  $u_r(R, \cdot) \le 0$ since $u \gt 0$ in $(0, R)$ 
  and $u_{0r} \le 0$ because of \eqref{eq:blow_up:cond_u0} we have
  \begin{align*} 
    \begin{cases}
      z_t \le z_{rr} + a(r, t) z_r + b(r, t) z & \text{in $(0, R) \times (0, T)$}, \\
      z \le 0,                                 & \text{on $\{0, R\} \times (0, T)$}, \\
      z(\cdot, 0) \le 0,                       & \text{in $(0, R)$}.
    \end{cases}
  \end{align*}
  Lemma~\ref{lm:v_rr_le_u} warrants that $-v_{rr} \le u$ in $Q_T$,
  hence $\sup_{(r, t) \in Q_T} b(x, t) \le 3u(r, t) + \kappa(r) \lt \infty$,
  such that the comparison principle \cite[Proposition~52.4]{QuittnerSoupletSuperlinearParabolicProblems2007}
  becomes applicable and yields $z \le 0$.
  The statement follows then upon taking $T \nea \tmax$.
\end{proof}

\begin{lemma} \label{lm:ws_bdd}
  We have
  \begin{align*}
    w_s(s, t) \le \frac{m_0 \ure^{\kappa_1 t}}{2 \pi s}
  \end{align*}
  for all $s \in (0, R^2), t \in (0, \tmax)$.
\end{lemma}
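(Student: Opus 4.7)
The plan is to reduce the claimed bound to the pointwise estimate $u(r,t)\le \frac{m_0\ure^{\kappa_1 t}}{\pi r^2}$ for $(r,t)\in(0,R)\times(0,\tmax)$, which is the familiar ``radially decreasing mass distribution'' bound: since $u$ is radially symmetric and, by Lemma~\ref{lm:ur_le_0}, radially nonincreasing in $r$, integrating it only over $B_r(0)$ already captures at least the mass $\pi r^2 u(r,\cdot)$.

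First, I would use Lemma~\ref{lm:ur_le_0}, which gives $u_r\le 0$ in $(0,R)\times(0,\tmax)$. For fixed $(r,t)\in(0,R)\times(0,\tmax)$ the monotonicity of $u(\cdot,t)$ yields
\begin{align*}
  m(t)
  = \intom u(\cdot,t)
  = 2\pi\int_0^R \rho\, u(\rho,t)\drho
  \ge 2\pi\int_0^r \rho\, u(\rho,t)\drho
  \ge 2\pi\, u(r,t) \int_0^r \rho\drho
  = \pi r^2 u(r,t).
\end{align*}
Combining this with the mass bound $m(t)\le m_0\ure^{\kappa_1 t}$ from Lemma~\ref{lm:mass} gives $u(r,t)\le \frac{m_0\ure^{\kappa_1 t}}{\pi r^2}$ in $(0,R)\times(0,\tmax)$.

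Finally, I would substitute $r=\sqrt{s}$ and invoke the identity $w_s(s,t)=\tfrac12 u(\sqrt{s},t)$ from \eqref{eq:pde_w:w_s_eq_u} in Lemma~\ref{lm:pde_w} to conclude
\begin{align*}
  w_s(s,t)=\tfrac12 u(\sqrt{s},t)\le \frac{m_0\ure^{\kappa_1 t}}{2\pi s}
  \quad\text{for }(s,t)\in(0,R^2)\times(0,\tmax).
\end{align*}
There is no real obstacle here: the entire statement is a one-line consequence once Lemma~\ref{lm:ur_le_0} (monotonicity of $u$) and Lemma~\ref{lm:mass} (growth of total mass) are in place, together with the transformation identity $w_s=\tfrac12 u(\sqrt{s},\cdot)$. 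The only thing to watch is that the decreasing monotonicity is used on all of $(0,R)$ -- which is exactly the content of Lemma~\ref{lm:ur_le_0}.
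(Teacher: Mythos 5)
Your argument is correct and coincides with the paper's own proof: both derive the pointwise bound $u(r,t)\le \frac{m_0\ure^{\kappa_1 t}}{\pi r^2}$ by combining the radial monotonicity from Lemma~\ref{lm:ur_le_0} with the mass estimate of Lemma~\ref{lm:mass}, and then conclude via the identity \eqref{eq:pde_w:w_s_eq_u}. Nothing is missing.
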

\begin{proof}
  Let $r \in (0, R)$ and $t \in (0, \tmax)$.
  On the one hand we have by Lemma~\ref{lm:ur_le_0} 
  \begin{align*}
        \intom u(\cdot, t)
    =   2\pi \int_0^R \rho u(\rho, t) \drho
    \ge 2\pi \int_0^r \rho u(r, t) \drho
    =   \pi r^2 u(r, t)
  \end{align*}
  and one the other hand by Lemma~\ref{lm:mass}
  \begin{align*}
        \intom u(\cdot, t)
    \le m_0 \ure^{\kappa_1 t}
  \end{align*}
  such that
  \begin{align*}
        u(r, t) \le \frac{m_0 \ure^{\kappa_1 t}}{\pi r^2}.
  \end{align*}
  
  The statement follows due to $w_s(s, t) = \frac12 u(s^\frac12, t)$ for $s \in (0, R^2)$ and $t \in (0, \tmax)$.
\end{proof}

\begin{remark}
  The exponent $-1$ in Lemma~\ref{lm:ws_bdd} is essentially optimal.
  Indeed, if we were able to show $w_s(s, t) \le f(t) s^{-q}$ for some $f \in C^0([0, \infty))$ and $q \lt 1$
  and all $(s, t) \in (0, R^2) \times (0, \tmax)$,
  then also $u(r, t) \le 2 f(t) r^{-2q}$ for $r \in (0, R)$ and $t \in (0, \tmax)$.
  However, this would yield $\sup_{t \in (0, T)} \|u(\cdot, t)\|_{\leb\lambda} \lt \infty$
  for some $\lambda \gt 1$ and all finite $T \in (0, \tmax]$,
  which in turn would rapidly imply $\tmax = \infty$, confer the proof of Proposition~\ref{prop:global_ex} below.
\end{remark}

With these preparations at hand
we are finally able to deal with the fourth integral on the right-hand side of \eqref{lm:phi_first_ode}.
\begin{lemma} \label{lm:i4}
  Let $\beta \gt -1$, $s_0 \in (0, \min\{1, R^2\})$
  and suppose that $\mu$ satisfies \eqref{eq:blow_up:cond_mu} for some $\mu_1 \gt 0$ and $\alpha \ge 2(p-1)$.
  Then
  \begin{align*}
        2^{p-1} \intns \int_0^s (s_0-s)^\beta \mu(\sqrt{\sigma}) w_s^p(\sigma, t) \dsigma \ds
    \le C \phi(t)
  \end{align*}
  for all $t \in (0, \tmmax)$, where $C \defs \left(\frac{m_0 \ure^\kappa}{\pi}\right)^{p-1} \mu_1$
  and $\tmmax \defs \min\{1, \tmax\}$.
\end{lemma}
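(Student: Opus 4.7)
The plan is to peel off one factor of $w_s$ from the integrand $\mu(\sqrt\sigma) w_s^p(\sigma,t)$ by bounding the combination $\mu(\sqrt\sigma) w_s^{p-1}(\sigma,t)$ pointwise by a constant, and then to reduce the resulting double integral of $w_s$ to $\phi(t)$ itself via Fubini and a single integration by parts.

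For the pointwise estimate I would combine \eqref{eq:blow_up:cond_mu}, which gives $\mu(\sqrt\sigma) \le \mu_1 \sigma^{\alpha/2}$, with Lemma~\ref{lm:ws_bdd}, which yields $w_s(\sigma,t) \le \frac{m_0 \ure^{\kappa_1 t}}{2\pi} \sigma^{-1}$, to obtain
\[
  \mu(\sqrt\sigma) w_s^{p-1}(\sigma,t) \le \mu_1 \left( \frac{m_0 \ure^{\kappa_1 t}}{2\pi} \right)^{p-1} \sigma^{\frac{\alpha}{2} - (p-1)}.
\]
The hypothesis $\alpha \ge 2(p-1)$ is precisely what makes the exponent of $\sigma$ here nonnegative; since $\sigma \lt s_0 \lt 1$ and $t \lt \tmmax \le 1$, the factor $\sigma^{\frac{\alpha}{2} - (p-1)}$ and the exponential are both controlled by constants, so multiplying by $w_s$ gives
\[
  \mu(\sqrt\sigma) w_s^p(\sigma,t) \le \mu_1 \left( \frac{m_0 \ure^{\kappa_1}}{2\pi} \right)^{p-1} w_s(\sigma,t) \quad \text{for all $(\sigma,t) \in (0, s_0) \times (0, \tmmax)$}.
\]

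Inserting this into the double integral, swapping the order of integration via Fubini, and integrating by parts once in $\sigma$ gives
\begin{align*}
      \intns \int_0^s (s_0-s)^\beta w_s(\sigma,t) \dsigma \ds
  &=  \intns w_s(\sigma,t) \int_\sigma^{s_0} (s_0-s)^\beta \ds \dsigma \\
  &=  \frac{1}{\beta+1} \intns (s_0-\sigma)^{\beta+1} w_s(\sigma,t) \dsigma \\
  &=  \intns (s_0-\sigma)^\beta w(\sigma,t) \dsigma = \phi(t),
\end{align*}
where the boundary contributions from the last integration by parts vanish thanks to $w(0, t) = 0$ and the fact that $\beta \gt -1$ makes $(s_0 - \sigma)^{\beta+1}$ vanish at $\sigma = s_0$. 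Collecting constants, $2^{p-1} \mu_1 \bigl( \tfrac{m_0 \ure^{\kappa_1}}{2\pi} \bigr)^{p-1} = \mu_1 \bigl( \tfrac{m_0 \ure^{\kappa_1}}{\pi} \bigr)^{p-1} = C$, as claimed.

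The only real difficulty is arranging the pointwise bound so that a full factor of $w_s$ can be absorbed into a constant without any leftover negative power of $\sigma$ — this is exactly the role of the threshold $\alpha \ge 2(p-1)$ and explains both the restriction to $s_0 \lt 1$ and the short-time cutoff $t \lt \tmmax \le 1$ built into the statement.
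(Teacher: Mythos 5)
Your proof is correct and follows essentially the same route as the paper: both combine the pointwise bound of Lemma~\ref{lm:ws_bdd} with \eqref{eq:blow_up:cond_mu} to reduce the integrand to a constant times $w_s$ (using $\alpha \ge 2(p-1)$, $s_0 \lt 1$ and $t \lt 1$ exactly as you describe), and then integrate by parts to recover $\phi(t)$. The only cosmetic difference is that you absorb $\sigma^{\alpha'}$ into the constant first and then apply Fubini to the double integral, whereas the paper integrates $\int_0^s \sigma^{\alpha'} w_s \dsigma$ by parts directly and discards the nonpositive remainder; the constants agree.
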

\begin{proof}
  Let $\alpha ' := \frac{\alpha}{2} - (p-1)$.
  Due to \eqref{eq:blow_up:cond_mu} we see that $\alpha' \ge 0$,
  such that an application of Lemma~\ref{lm:ws_bdd} and an integration by parts yield 
  \begin{align*}
    &\pe  2^{p-1} \intns (s_0-s)^\beta \int_0^s \mu(\sqrt{\sigma}) w_s^p(\sigma, t) \dsigma \ds \\
    &\le  \left(\frac{2m_0 \ure^\kappa}{2\pi}\right)^{p-1} \mu_1
            \intns (s_0-s)^\beta \int_0^s \sigma^{\alpha'} w_s(\sigma, t) \dsigma \ds \\
    &=    C  \intns (s_0-s)^\beta \left[
            - \alpha' \int_0^s \sigma^{\alpha'-1} w(\sigma, t) \dsigma
            + s^{\alpha'} w(s, t)
          \right] \ds \\
    &\le  C \intns (s_0-s)^\beta w(s, t)
    =     C \phi(t)
  \end{align*}
  for $t \in (0, \tmmax)$.
\end{proof}

\subsection{Conclusion. Proof of Theorem~\ref{th:blow_up}}
As it turns out, for any initial mass $m_0 \gt 8\pi$ we are able to find a suitable initial datum $u_0$ with $\intom u_0 = m_0$
as well as sufficiently small $s_0$ and sufficiently large $\beta$
such that a combination of the estimates above makes Lemma~\ref{lm:ode_blow_up} applicable --
implying that $\phi$ and hence $u$ must blow up in finite time.

\begin{proof}[Proof of Theorem~\ref{th:blow_up}]
  Let $m_0 \gt 8\pi$ and $\mu_1 \gt 0$.

  The function   
  \begin{equation*}
    f: (0, m_0] \times [0, \tmax) \times (1, \infty) \times [0, 1) \times [0, 1) \ra \R
  \end{equation*}
  defined by
  \begin{equation*}
    (\tilde m, \tilde T, \beta, \lambda, \eta) \mapsto
    (1-\eta) \beta (\beta+2)
    \cdot \frac{\tilde m^2}{4\pi^2(\beta+1)^2} (1-\lambda)^{2\beta + 2}
    \cdot \frac{\pi}{2 m_0 \ure^{\kappa \tilde T}}
  \end{equation*}
  is continuous and satisfies
  \begin{align*} 
      \lim_{\beta \nea \infty}
      f(m_0, 0, \beta, 0, 0)
    = \frac{m_0}{8\pi}.
  \end{align*}

  Thus, due to our assumption that $m_0 \gt 8\pi$ we may first choose $\beta \in (1, \infty)$
  and then $\tilde m \in (0, m)$, $\tilde T \in (0, \min\{1, \tmax\})$, $\lambda \in (0, 1)$ and $\eta \in (0, 1)$
  as well as $\eps \in (0, 1)$ such that
  \begin{equation} \label{eq:blow_up:f_ge_1}
    (1-\eps)^2 f(\tilde m, \tilde T, \beta, \lambda, \eta) \ge 1.
  \end{equation}

  For $s_0 \gt 0$ let
  \begin{align*}
    c_1(s_0)      &\defs (1-\eta) \frac{\beta(\beta+2)}{s_0^{\beta+2}}, \\
    c_2(s_0)      &\defs \left(m_0 \ure^\kappa \pi^{-1}\right)^{p-1} \mu_1, \\
    c_{3,1}(s_0)  &\defs \frac{m_0^2 \ure^{2\kappa_1 t}}{2 \eta (\beta+1)(\beta+2) |\Omega|^2 } s_0^{\beta+2} \quad \text{and} \\
    \phi_0(s_0)   &\defs \frac{\tilde m}{2 \pi (\beta+1)} ((1-\lambda) s_0)^{\beta+1},
  \end{align*}
  then there exist $d_1, d_2, d_3 \gt 0$ such that
  \begin{align*}
    \frac{c_2(s_0)}{c_1(s_0) \phi_0(s_0)}       = d_1 s_0, \quad
    \frac{c_{3,1}(s_0)}{c_1(s_0) \phi_0^2(s_0)} = d_2 s_0^2
    \quad \text{and} \quad
    \frac{1}{c_1(s_0) \phi_0(s_0)}              = d_3 s_0
  \end{align*}
  for all $s_0 \gt 0$.

  Hence we may choose $s_0 \in (0, \min\{1, R^2\})$ small enough such that
  \begin{align*}
    \frac{\eps}{3} \phi_0(s_0)                   \ge \frac{c_2(s_0)}{c_1(s_0)}, \quad 
    \left( \frac{\eps}{3}  \phi_0(s_0) \right)^2 \ge \frac{c_{3, 2}(s_0)}{c_1(s_0)} \quad \text{and} \quad
    \frac{\eps}{3} \phi_0(s_0)                   \ge \frac{2}{c_1(s_0) \tilde T}.
  \end{align*}
  Set also
  \begin{align*}
    c_{3,2}(s_0)  &\defs \frac{2 s_0^\beta m_0 \ure^{\kappa \tilde T}}{\pi},
  \end{align*}
  then
  \begin{align*}
    \left( (1-\eps) \phi_0(s_0) \right)^2 \ge \frac{c_{3, 2}(s_0)}{c_1(s_0)}
  \end{align*}
  by \eqref{eq:blow_up:f_ge_1}.

  Suppose now that $\kappa, \mu \in C^1([0, R])$ comply with \eqref{eq:blow_up:cond_kappa_mu} and \eqref{eq:blow_up:cond_mu}
  and that $u_0$ satisfies \eqref{eq:blow_up:cond_u0} and \eqref{eq:blow_up:mass_concentration} with $r_1 \defs (\lambda s_0)^2$,
  but that the corresponding solution $(u, v)$ given by Lemma~\ref{lm:local_ex} is global in time.
  Due to the lemmata above the function $\phi$ defined in Lemma~\ref{lm:phi_first_ode} would then fulfill 
  \begin{align*}
    \begin{cases}
      \phi'(t)  \ge c_1 \phi^2(t) - c_2 \phi(t) - c_{3,1} - c_{3, 2}, \quad t \in (0, \tilde T), \\
      \phi(0) \ge \phi_0,
    \end{cases}
  \end{align*}
  where we abbreviated $c_i \defs c_i(s_0)$ and $\phi_0 \defs \phi_0(s_0)$.

  However, as
  \begin{align*}
          \phi_0
    &=    \frac{\eps}{3} \phi_0
          + \frac{\eps}{3} \phi_0
          + (1-\eps) \phi_0
          + \frac{\eps}{3} \phi_0 \\
    &\ge  \frac{c_2}{c_1} + \sqrt{\frac{c_{3,1}}{c_1}} + \sqrt{\frac{c_{3,2}}{c_1}} + \frac{2}{c_1 \tilde T} \\
    &\ge  \frac{c_2 + \sqrt{c_1 (c_{3,1} + c_{3,2})}}{c_1} + \frac{2}{c_1 \tilde T},
  \end{align*}
  where we have again set $\phi_0 \defs \phi_0(s_0)$,
  Lemma~\ref{lm:ode_blow_up} would imply $\tilde T \le \frac12 \tilde T$,
  hence our assumption that $\tmax = \infty$ must be false.

  Finally, \eqref{eq:blow_up:limsup_u} is a direct consequence of Lemma~\ref{lm:local_ex}.
\end{proof}

\section{Notes on global solvability}
Finally, we include short proofs for Proposition~\ref{prop:critical_mass} and Proposition~\ref{prop:global_ex}.

\begin{proof}[Proof of Proposition~\ref{prop:critical_mass}]
  This proof is based on a comparison principle for the scalar equation \eqref{lm:mass}.
  A similar idea (with a similar supersolution) has been employed in \cite[Lemma~5.2]{TaoWinklerCriticalMassInfinitetime2017}.

  Let $u_0 \in \con0$ be nonnegative and radially symmetric with $m_0 \defs \intom u_0 \lt 8 \pi$ as well as
  $(u, v)$ and $w$ be as constructed in Lemma~\ref{lm:local_ex} and defined in Lemma~\ref{lm:pde_w}, respectively.
  Then we may choose $a \in (\frac{m_0}{2\pi}, 4)$ and
  as $w(\cdot, 0) \le \frac{m_0}{2\pi}$ in $(0, R^2)$ and
  \begin{align*}
    \lim_{b \sea 0} \sup_{s \in (0, R^2)} \left| \frac{a s}{b + s} - a \right| = 0
  \end{align*}
  we may also choose $b \gt 0$ such that
  \begin{align*}
    \ol w: [0, R^2] \times [0, \infty) \ra \R, \quad
    (s, t) \mapsto \frac{a s}{b + s}
  \end{align*}
  fulfills $\ol w(\cdot, 0) \ge w(\cdot, 0)$ in $(0, R^2)$.

  Furthermore, by a direct computation
  \begin{align*}
    \ol w_s(s, t) = \frac{ab}{(b + s)^2} 
    \quad \text{and} \quad
    \ol w_{ss}(s, t) = -\frac{2ab}{(b + s)^3}
  \end{align*}
  for all $(s, t) \in (0, R^2) \times (0, \infty)$,
  hence
  \begin{align*}
    &\pe  \ol w_t(s, t)
          - 4s \ol w_{ss}(s, t)
          - 2 \ol w(s, t) \ol w_s(s, t)
          + \frac{m(t)}{|\Omega|} s \ol w_s(s, t)
          + 2^{p-1} \int_0^s 2^{p-1} \mu(\sqrt \sigma) \ol w_s^p(\sigma, t) \dsigma \\
    &\ge  \frac{8 a b s}{(b + s)^3} - \frac{2 a^2 b s}{(b + s)^3}
    \ge   0
  \end{align*}
  for all $(s, t) \in (0, R^2) \times (0, \infty)$ because of $a \le 4$.

  Therefore, $\ol w$ is a supersolution of \eqref{eq:pde_w:pde},
  fulfills $\ol w(0, \cdot) \ge 0$ and $\ol w(R^2, \cdot) \ge \frac{m_0}{2\pi}$
  as well as $\ol w(\cdot, 0) \ge w(\cdot, 0)$
  such that the comparison principle warrants $\ol w \ge w$ in $(0, R^2) \times (0, \tmax)$.

  As $w(0, t) = \ol w(0, t) = 0$ for all $t \in (0, \tmax)$ this implies
  \begin{align*}
        \limsup_{t \nea \tmax} w_s(0, t)
    =   \limsup_{t \nea \tmax} \lim_{h \sea 0} \frac{w(h, t) - w(0, t)}{h}
    \le \limsup_{t \nea \tmax} \lim_{h \sea 0} \frac{\ol w(h, t) - \ol w(0, t)}{h}
    \lt \infty.
  \end{align*}
  
  Due to non-degeneracy of \eqref{eq:pde_w:pde} outside of the origin and boundedness of $w$ parabolic regularity ensures
  $\limsup_{t \nea \tmax} \|w_s(\cdot, t)\|_{L^\infty((0, R^2))} \lt \infty$
  implying $\tmax = \infty$ by Lemma~\ref{lm:local_ex}.
\end{proof}

\begin{proof}[Proof of Proposition~\ref{prop:global_ex}]
  Let $p \gt 2$, $\alpha \lt p - 2$, $\mu_1 \gt 0$, $\kappa, \mu \in C^1([0, R])$ be such that \eqref{eq:global_ex:cond_mu} holds,
  $0 \le u_0 \in \con 0$
  and denote the corresponding solution given by Lemma~\ref{lm:local_ex} by $(u, v)$.

  By our assumption on $\alpha$ there exists $q \in (1, \min\{\frac{2p - 4 - \alpha}{\alpha}, 2\})$.
  Testing the first equation with $u^{q-1}$ gives
  \begin{align*}
      \frac1q \ddt \intom u^q
    = - \frac{4(q-1)}{q^2} \intom |\nabla u^\frac{q}{2}|^2
      + \frac{q-1}{q} \intom \nabla u^q \cdot \nabla v
      + \intom \kappa u^q
      - \intom \mu u^{p+q-1}
  \end{align*}
  in $(0, \tmax)$,
  wherein
  \begin{align*}
          \intom \nabla u^q \cdot \nabla v
    &=    \intom u^{q+1} - \intom u^q \frac{m(t)}{|\Omega|}
     \le  \frac{q}{\mu_1 (q-1)} \intom |x|^\alpha u^{p+q-1}
          + c_1 \int_0^R r^{1 - \alpha \frac{q+1}{p-2}} \dr
  \end{align*}
  in $(0, \tmax)$ for some $c_1 \gt 0$ by Young's inequality (with exponents $\frac{p+q-1}{q+1}$ and $\frac{p+q-1}{p-2}$).

  By the definition of $q$ we have $1 - \alpha \frac{q+1}{p-2} \gt -1$,
  hence the function $y: [0, \tmax) \ra \R, t \mapsto \frac1q \intom u^q$ satisfies
  $y' \le c_2$ in $(0, \tmax)$ for some $c_2 \gt 0$.

  Assuming for the sake of contradiction $\tmax \lt \infty$,
  this implies $\sup_{t \in (0, \tmax)} \intom u^q(\cdot, t) \lt \infty$,
  hence by elliptic regularity theory (cf.\ \cite[Theorem~19.1]{FriedmanPartialDifferentialEquations1976}) $\sup_{t \in (0, \tmax)} \|v(\cdot, t)\|_{\sob 2q}$ is finite as well.
  Therefore, the Sobolev embedding theorem warrants finiteness of $\sup_{t \in (0, \tmax)} \intom |\nabla v(\cdot, t)|^\frac{2q}{2-q}$.
  Finally, as $\frac{2q}{2-q} \gt 2$, a semi-group argument as in \cite[Lemma~4.1]{FuestBoundednessEnforcedMildly2019}
  shows boundedness of $\{u(\cdot, t): t \in (0, \tmax)\}$ in $\leb\infty$ -- contradicting Lemma~\ref{lm:local_ex}.
\end{proof}

\section*{Acknowledgments}
\small The author is partially supported by the German Academic Scholarship Foundation
and by the Deutsche Forschungsgemeinschaft within the project \emph{Emergence of structures and advantages in
cross-diffusion systems}, project number 411007140.

\newcommand{\noopsort}[1]{}

\end{document}